\theoremstyle{plain}
\newtheorem{thm}{Theorem}
\newtheorem{prop}[thm]{Proposition}
\newtheorem{lem}[thm]{Lemma}
\newtheorem{warning}[thm]{Warning}
\theoremstyle{definition}
\newtheorem{defi}[thm]{Definition}
\theoremstyle{remark}
\newtheorem{rmk}[thm]{Remark}
\newcommand{\adeles}{\mathbb{A}}
\newcommand{\Ind}{\mbox{Ind}}
\newcommand{\hn}{\mathcal{H}_n}
\newcommand{\OK}{\mathcal{O}_{\cmfield}}
\newcommand{\Oe}{\mathcal{O}_E}
\newcommand{\Hom}{\mbox{Hom}}
\newcommand{\isomto}{\overset{\sim}{\rightarrow}}
\newcommand{\ci}{C^{\infty}}
\newcommand{\uo}{\underline{\omega}}
\newcommand{\padic}{p\mathrm{-adic}}
\newcommand{\cpct}{\mathcal{K}}
\newcommand{\cmfield}{K}
\newcommand{\uA}{\underline{A}}
\newcommand{\gln}{\mathrm{GL_n}}
\newcommand{\gl}{\mathrm{GL}}
\newcommand{\gm}{\mathbb{G}_m}
\newcommand{\IR}{\mathbb{R}}
\newcommand{\ZZ}{\mathbb{Z}}
\newcommand{\IC}{\mathbb{C}}
\newcommand{\IQ}{\mathbb{Q}}
\newcommand{\id}{\mbox{Id}}
\newcommand{\End}{\mbox{End}}
\newcommand{\incl}{\mbox{incl}}
\newcommand{\Spec}{\mathrm{Spec}}
\newcommand{\Cp}{\IC_p}
\newcommand{\OCp}{\mathcal{O}_{\Cp}}
\newcommand{\Gal}{\mathrm{Gal}}
\newcommand{\diag}{\mathrm{diag}}
\newcommand{\hern}{\mathrm{Her}_n}
\newcommand{\Sh}{\mathrm{Sh}}
\newcommand{\V}{\mathcal{V}}
\newcommand{\h}[1]{\mathcal{H}_{#1}}
\newcommand{\Auniv}{\mathcal{A}_{\mathrm{univ}}}
\newcommand{\hdr}{H^1_{\mathrm{DR}}}
\newcommand{\ks}{\mathrm{KS}}
\newcommand{\unitroot}{\mathrm{\underline{U}}}
\renewcommand{\paragraph}[1]{\noindent\textbf{#1}. }
\begin{document}

\bibliographystyle{alpha}       
                    
\title[Differential operators, pullbacks, and families]{Differential operators, pullbacks, and families of automorphic forms on unitary groups}

\author{Ellen Eischen}
\thanks{The author's work on this paper was partially supported by National Science Foundation Grants DMS-1249384 and DMS-1559609.}
\address{Ellen Elizabeth Eischen\\
Department of Mathematics\\
University of Oregon\\
Fenton Hall\\
Eugene, OR 97403-1222\\
USA}
\email{eeischen@uoregon.edu}
\maketitle

\begin{abstract}
This paper has two main parts.  First, we construct certain differential operators, which generalize operators studied by G. Shimura.  Then, as an application of some of these differential operators, we construct certain $p$-adic families of automorphic forms.  Building on the author's earlier work, these differential operators map automorphic forms on a unitary group of signature $(n,n)$ to (vector-valued) automorphic forms on the product $U^\varphi\times U^{-\varphi}$ of two unitary groups, where $U^\varphi$ denotes the unitary group associated to a Hermitian form $\varphi$ of arbitrary signature on an $n$-dimensional vector space.  These differential operators have both a $p$-adic and a $\ci$ incarnation.  In the scalar-weight, $\ci$-case, these operators agree with ones studied by Shimura.  In the final section of the paper, we also discuss some generalizations to other groups and settings.

The results from this paper apply to the author's paper-in-preparation with J. Fintzen, E. Mantovan, and I. Varma and to her ongoing joint project with M. Harris, J. -S. Li, and C. Skinner; they also relate to her recent paper with X. Wan.
\end{abstract}

\setcounter{tocdepth}{2}
\tableofcontents
\setcounter{secnumdepth}{3}

\section{Introduction}
\subsection{Overview}

This paper has two main parts.  First, we construct certain differential operators, which generalize operators studied by G. Shimura.  Then, as an application of some of these differential operators, we construct certain $p$-adic families of automorphic forms.  Beyond this paper, the anticipated applications are to the construction of $p$-adic $L$-functions (for example, the author's ongoing joint work with M. Harris, J.-S. Li, and C. Skinner), in analogue with the way that analogous differential operators in \cite{sh, shar} are used in the context of ($\IC$-valued) $L$-functions.  The present paper also sets the stage and introduces some of the main ideas for the author's joint paper-in-preparation with J. Fintzen, E. Mantovan, and I. Varma (constructing other $p$-adic families of automorphic forms).  In separate work with X. Wan concerning Klingen Eisenstein series (\cite{EW}), we discuss related differential operators and apply them to obtain results on Klingen Eisenstein series.  These operators and families are also (presumably) related to ones being studied by Skinner and E. Urban in the case of Siegel modular forms (a project they mentioned in 2010 at the RTG/FRG conference at UCLA); it will be interesting to see how the two approaches relate to each other.

This paper is intended to be a (relatively) concise and natural application of the author's prior results from \cite{EDiffOps, apptoSHL, apptoSHLvv}.  In the course of the paper, we also explain how to extend certain definitions and results on the differential operators developed by Shimura.  (Shimura's motivation for studying these differential operators, namely studying the algebraicity of values of $L$-functions, is discussed the second to last paragraph on \cite[p. 11137]{ShPNAS}, as well as in \cite[Introduction and Section 22]{sh}.)

\subsubsection{Organization of the paper}
In Section \ref{autformsunitarygroups-section}, we briefly review the basic theory of automorphic forms on unitary groups and introduce our setup.  In Section \ref{diffops-section}, we introduce certain differential operators, first in the $\ci$ setting and then in the $\padic$ setting.  These operators are built from the differential operators in \cite{EDiffOps}.  The differential operators in the present paper map automorphic forms on a unitary group of signature $(n,n)$ to automorphic forms on the product $U^\varphi\times U^{-\varphi}$ of two unitary groups, where $U^\varphi$ denotes the unitary group associated to a Hermitian form $\varphi$ of arbitrary signature on an $n$-dimensional vector space.  In the scalar-weight, $\ci$-case, these operators agree with ones studied by Shimura in \cite[Section 23]{sh}.  In Section \ref{measure-section}, we apply some of these differential operators to the Siegel Eisenstein series introduced in \cite{apptoSHLvv} to obtain certain $p$-adic automorphic form-valued measures; this is the main result of the paper.  We conclude the paper, in Section \ref{othergroupspullbacks}, by discussing generalizations to other groups.

\subsection{Conventions}

Fix a CM field $K$ with maximal totally real subfield $E$.  Fix a rational prime $p$ that is unramified in $K$ and such that every prime in $E$ above $p$ splits completely in $K$.  Fix embeddings
\begin{align*}
\iota_\infty: &\bar{\IQ}\hookrightarrow \IC\\
\iota_p:&\bar{\IQ}\hookrightarrow\IC_p.
\end{align*}
Fix an isomorphism
\begin{align*}
\iota: \IC_p\isomto\IC
\end{align*}
satisfying $\iota\circ\iota_p = \iota_\infty$.  From here on, we identify $\bar{\IQ}$ with $\iota_p(\bar{\IQ})$ and $\iota_\infty(\bar{\IQ})$.

Fix a CM type $\Sigma\subseteq\Hom(K, \bar{\IQ})$.  
Given $\sigma\in\Hom(E, \bar{\IQ})$, we also write $\sigma$ to denote the unique element of $\Sigma$ prolonging $\sigma$ (whenever no confusion can arise).  Let $\epsilon$ be the unique non-trivial element in $\Gal(K/E)$.  For any $x\in K$ and $\sigma\in \Sigma$, we define
\begin{align*}
\bar{x}&:=\epsilon(x)\\
\bar{\sigma} &:= \sigma\circ\epsilon.
\end{align*}
For each $\sigma\in\Sigma$, let $\mathfrak{p}_\sigma$ be the prime in $K$ over $p$ determined by $\iota_p\circ\sigma$.
 The correspondence $\sigma\leftrightarrow \mathfrak{p}_\sigma$ determines a set $\Sigma_p$ containing exactly one place in $K$ above each place in $E$ above $p$.  We identify $\sigma$ with $\mathfrak{p}_\sigma$ and $\Sigma$ with $\Sigma_p$ via $\sigma\leftrightarrow \mathfrak{p}_\sigma$ throughout.  Given a place $w$ of $\Sigma_p$, we also write $\sigma_w$ to denote the element of $\Sigma$ determining $w$.

For any field $F$, we write $\mathcal{O}_F$ to denote the ring of integers in $F$.  We denote by $\adeles$  the ad\`eles over $\IQ$, and we denote by $\adeles^{(\infty)}$ the ad\`eles at the finite places of $\IQ$.  Similarly, we denote by $\adeles_F$  the ad\`eles over $F$ for any field $F$, and we denote by $\adeles_F^{(\infty)}$ the ad\`eles at the finite places of $F$.  Given a modulus $\mathfrak{m}$, we denote by $\adeles_F^{(\mathfrak{m})}$ the ad\`eles at the places $v$ of $F$ not dividing $\mathfrak{m}$; when $F = \IQ$, we omit the subscript $F$.  For any finite extension of fields $L/F$, we write $\mathbf{N}_{L/F}$ to denote the norm from $L$ to $F$.  Given a $\mathcal{O}_F$-algebra $R$, the norm map $\mathbf{N}_{L/F}$ on $L$ determines a group homomorphism
\begin{align*}
\left(\mathcal{O}_L\otimes R\right)^\times \rightarrow R^\times
\end{align*}
via $a\otimes r\mapsto\mathbf{N}_{L/F}(a)r$.  When the fields are clear from context, we simply write $\mathbf{N}$.

Given an indexing set $X$, a ring $R$, an element $a$ of $R$, and a tuple $d = \left(d_x\right)_{x\in X}$ of exponents in $\IC$ such that $a^{d_x}$ is an element of $R$, we denote by $a^d$ the element defined by
\begin{align*}
a^d:=\prod_{x\in X}a^{d_x}\in R.
\end{align*}
If $a = \left(a_x\right)_{x\in X}\in R^X$, we write
\begin{align*}
a^d:=\prod_{x\in X}a_x^{d_x}\in R.
\end{align*}   Also, if $m = \left(m_x\right)_{x\in X}$, we denote by $m+d$ the element $\left(m_x+d_x\right)_{x\in X}$.  If $m$ is instead an element of $\IC$, we write $m+d$ or $d+m$ the element $\left(m+d_x\right)_{x\in X}$.

We denote by $M_{r\times s}(R)$ the $r\times s$ matrices with entries in $R$ for any ring $R$, and we let $\hern(R)$ denote the Hermitian matrices in $M_{n\times n}(R)$ (for appropriate $R$).  Given $a\in \hern(R)$, we write $a>0$ if $a$ is positive definite.  We denote by ${ }^t a$ the transpose of $a$.

\section{Automorphic forms on unitary groups}\label{autformsunitarygroups-section}

\subsection{Unitary groups}\label{unitarygroupssection}

\subsubsection{Preliminaries and conventions for unitary groups}
Given a finite-dimensional vector space $\mathcal{V}$ over the CM field $K$, a function $\phi: \mathcal{V}\times \mathcal{V}\rightarrow K$, and $g\in \End_K(\mathcal{V})$, we denote by $g\cdot f$ the function defined by
\begin{align*}
(g\cdot f)\left(v, w\right) := f\left(g\left(v\right), g\left(w\right)\right)
\end{align*}
for all $v, w \in \mathcal{V}$.  For any nondegenerate Hermitian form $\phi$ on $\mathcal{V}$, $\phi$ extends linearly to a function on $\mathcal{V}\otimes_\IQ R$ for all $\IQ$-algebras $R$; we denote by
\begin{align*}
U^\phi = U(\phi)= U(\mathcal{V}, \phi)
\end{align*}
the algebraic group whose $R$-points, for any $\IQ$-algebra $R$, are given by
\begin{align*}
U^\phi(R) = \left\{g\in\End_{K\otimes_\IQ R}\left(\mathcal{V}\otimes_\IQ R\right)|g\cdot \phi = \phi\right\}.
\end{align*}
We also denote by $GU^\phi = GU(\phi) = GU(\mathcal{V}, \phi)$ the algebraic group whose $R$-points, for any $E$-algebra $R$, are given by
\begin{align*}
GU^\phi(R) = \left\{g\in\End_{K\otimes_\IQ R}\left(\mathcal{V}\otimes_\IQ R\right)|g\cdot \phi = \lambda\phi \mbox{ with } \lambda\in R^\times\right\}.
\end{align*}
We denote by $\nu$ the similitude character
\begin{align*}
\nu: GU^\phi\rightarrow \mathrm{Res}_{E/\IQ}\gm
\end{align*}
defined by
\begin{align*}
g\cdot\phi = \nu(g)\phi.
\end{align*}
When $R = \adeles_E$ or $R = \IR$, we define
\begin{align*}
GU^\phi_+(R)
\end{align*}
to be the subgroup of $GU^\phi$ consisting of elements such that the similitude factor at each archimedean place of $K\otimes_E R$ is positive.
Note that the choice of a CM type $\Sigma$ induces a natural isomorphism
\begin{align*}
\mathcal{V}\otimes_\IQ\IR\isomto \prod_{\sigma\in\Sigma}\mathcal{V}_\sigma,
\end{align*}
where $\mathcal{V}_\sigma$ is a $\dim_K\mathcal{V}$-dimensional $\IC$-vector space.  So $U^\phi(\IR)$ is isomorphic to a subgroup
\begin{align}\label{signatureisom}
\prod_{\sigma\in\Sigma}U^\phi_\sigma\subseteq\prod_{\sigma\in\Sigma}\gl_{\dim_K\mathcal{V}}(\IC).
\end{align}

\subsubsection{A choice of unitary groups}
Fix a positive integer $n$.  Let $V$ be an $n$-dimensional vector space over the CM field $K$ together with a nondegenerate Hermitian form $\varphi$.  Let $-V$ denote the vector space $V$ together with the Hermitian form $-\varphi$, where $-\varphi$ denotes the map
\begin{align*}
V\times V&\rightarrow K\\
(u, v)&\mapsto -\varphi(u, v).
\end{align*}
Let 
\begin{align}\label{equ-etadef}
\eta = \varphi\oplus-\varphi
\end{align}
be the Hermitian form on $W = V\oplus -V$ defined by
\begin{align*}
\eta\left(\left(v_1, w_1\right), \left(v_2, w_2\right)\right) = \varphi(v_1, v_2)-\varphi(w_1, w_2),
\end{align*}
for all $v_1, w_1, v_2, w_2\in V$.  Observe that the canonical map
\begin{align*}
V\oplus-V&\isomto W = V\oplus -V\\
(u, v)&\mapsto (u, v),
\end{align*}
together with the definition of $\eta$, induces embeddings
\begin{align*}
U^\varphi\times U^{-\varphi}&\hookrightarrow U^\eta\\
G(U^\varphi\times U^{-\varphi})&\hookrightarrow GU^\eta,
\end{align*}
where
\begin{align*}
G(U^\varphi\times U^{-\varphi}) = \left\{(g, h)\in U^\varphi\times U^{-\varphi} | \nu(g) = \nu(h)\right\}.
\end{align*}
As in Expression \eqref{signatureisom}, we write
\begin{align*}
U^\varphi(\IR)\cong \prod_{\sigma\in\Sigma}U^\varphi_\sigma\subseteq\prod_{\sigma\in\Sigma}\gln(\IC)\\
U^\eta(\IR)\cong \prod_{\sigma\in\Sigma}U^\eta_\sigma\subseteq\prod_{\sigma\in\Sigma}\gln(\IC).
\end{align*}
Let $(a_\sigma, b_\sigma)$ be the signature of $U^\varphi_\sigma$.  Then the signature of $U^{-\varphi}_\sigma$ is
\begin{align*}
\left(b_\sigma, a_\sigma\right)_{\sigma\in\Sigma}\in\left(\ZZ_{\geq0}\times\ZZ_{\geq0}\right)^\Sigma,
\end{align*}
and the signature of $U_\sigma^{\eta}$ is $(n,n)$ for all $\sigma\in\Sigma$.  
Let
\begin{align*}
a &= (a_\sigma)_{\sigma\in\Sigma}\in \ZZ_{\geq0}^\Sigma\\
b&=(b_\sigma)_{\sigma\in\Sigma}\in \ZZ_{\geq0}^\Sigma.
\end{align*}

\subsection{Hermitian symmetric spaces and moduli spaces}\label{symmspacesshvarieties}

Note that for any nondegenerate Hermitian form $\phi$ on a finite-dimensional vector space $\V$ over $K$, $U^\phi(\IR)$ modulo a maximal compact subgroup $\cpct^\phi$ is a Hermitian symmetric space $\mathfrak{Z}^\phi$.  Shimura has written down such symmetric spaces explicitly in terms of coordinates (for instance, in \cite[Equation (26.4)]{shar} and \cite[Equation (6.1.4)]{sh}).  In \cite[Section 5.2]{hida}, H. Hida also provides an explicit description of these domains in terms of coordinates.  Our particular results and proofs, like those in \cite{ShPNAS}, however, do not require the details of these spaces in terms of coordinates; so to keep our current discussion as simple and clear as possible, we omit their details (except in the case where $\phi$ is of signature $(n,n)$) in this paper.  When the signature of $U^\phi_\sigma$ is $(n,n)$ for all $\sigma\in\Sigma$, the symmetric space $\mathfrak{Z}^\phi$ is holomorphically isomorphic to the space
\begin{align*}
\hn = \prod_{\sigma\in\Sigma}\h{n, \sigma},
\end{align*}
where
\begin{align*}
\h{n,\sigma}=\left\{z\in\gln(\IC)\mid i\left({ }^t\bar{z}-z\right)>0\right\}
\end{align*}
for all $\sigma$ in $\Sigma$.

Given maximal compact subgroups $\cpct^\varphi$ of $U^\varphi(\IR)$ and $\cpct^{-\varphi}$ of $U^{-\varphi}(\IR)$, respectively, we shall always choose a maximal compact subgroup $\cpct^{\varphi\oplus-\varphi}$ of $U^\eta(\IR) = U^{\varphi\oplus-\varphi}(\IR)$ so that
\begin{align*}
\cpct^{\varphi\oplus-\varphi}\cap\left(U^\varphi(\IR)\times U^{-\varphi}(\IR)\right) = \cpct^\varphi\times\cpct^{-\varphi}.
\end{align*}
Note that there is then an inclusion
\begin{align}\label{symmspaceinclusion}
\iota_{\varphi, \infty}: \mathfrak{Z}^\varphi\times\mathfrak{Z}^{-\varphi}\hookrightarrow\hn
\end{align}
compatible with the inclusion
\begin{align*}
\iota_\varphi:U^\varphi\times U^{-\varphi}\hookrightarrow U^\eta.
\end{align*}
\begin{rmk}
In \cite[Equation (6.10.2)]{sh}, Shimura gives such an inclusion $\iota_{\varphi, \infty}$ (denoted $\iota$) explicitly in terms of his choice of coordinates for the symmetric spaces; this particular map is holomorphic in the first variable and antiholomorphic in the second variable.  In \cite[A8.13]{shar}, though, he modifies the choice so that the map is holomorphic in both variables.
\end{rmk}

For any nonnegative integer $m$, let 
\begin{align*}
U(m):=\left\{g\in\gln(\IC)\mid g{ }^t\bar{g} = 1_m\right\}.
\end{align*} 
Note that we shall choose our maximal compact subgroups together with isomorphisms
\begin{align}
\cpct^{\varphi\oplus-\varphi}&\cong \prod_{\sigma\in\Sigma}\left(U(n)\times U(n)\right)\label{uniso}\\
\cpct^\varphi&\cong \prod_{\sigma\in\Sigma}\left(U\left(a_\sigma\right)\times U\left(b_\sigma\right)\right)\\
\cpct^{-\varphi}&\cong \prod_{\sigma\in\Sigma}\left(U\left(b_\sigma\right)\times U\left(a_\sigma\right)\right),
\end{align}
so that the inclusion
\begin{align}\label{cpctinclusion}
\iota_{\cpct}:\cpct^\varphi\times \cpct^{-\varphi}\hookrightarrow\cpct^{\varphi\oplus-\varphi}
\end{align}
together with these isomorphisms is then identified with the inclusion\begin{align*}
\prod_{\sigma\in\Sigma}\left(U\left(a_\sigma\right)\times U\left(b_\sigma\right)\right)\times\prod_{\sigma\in\Sigma}\left(U\left(b_\sigma\right)\times U\left(a_\sigma\right)\right)&\hookrightarrow \prod_{\sigma\in\Sigma}\left(U(n)\times U(n)\right)\\
\left(\left(g, h\right), \left(g', h'\right)\right)&\mapsto \left(\diag\left(g, g'\right), \diag\left(h, h'\right)\right).
\end{align*}
Note that by choosing a basis, we may identify the complexification of $\left(U(n)\times U(n)\right)$ with a subgroup of $\prod_{\sigma\in\Sigma}\left(\gln(\IC)\times\gln(\IC)\right)$.  Using the isomorphism \eqref{uniso}, we identify the complexification $\cpct^{c, \varphi\oplus-\varphi}$ of $\cpct^{\varphi\oplus-\varphi}$ with this subgroup of $\prod_{\sigma\in\Sigma}\left(\gln(\IC)\times \gln(\IC)\right)$.  Then the inclusions of the complexifications $\cpct^{c, \varphi}$ and $\cpct^{c, -\varphi}$ of $\cpct^{\varphi}$ and $\cpct^{-\varphi}$ into $\prod_{\sigma\in\sigma}\gl_{a_\sigma}(\IC)\times\gl_{b_\sigma}(\IC)$ and $\prod_{\sigma\in\Sigma}\gl_{b_\sigma}(\IC)\times\gl_{a_\sigma}(\IC)$, respectively, induced by $\iota_\cpct$ give a commutative diagram:
\begin{tiny}
\begin{align}\label{commdiag}
\xymatrix{
\cpct^{c, \varphi}\ar@{^{(}->}[dd]\times\cpct^{c, -\varphi}\ar@{^{(}->}[dd]\ar@{^{(}->}[rrrr]^{\mbox{the inclusion induced by } \iota_\cpct}  & && &\cpct^{c, \varphi\oplus-\varphi}\ar@{^{(}->}[dd]\\
& &&&&\\
 \prod_{\sigma\in\Sigma}\left(\gl_{a_\sigma}(\IC)\times\gl_{b_\sigma}(\IC)\right)\times\left(\gl_{b_\sigma}(\IC)\times\gl_{a_\sigma}(\IC)\right)\ar@{^{(}->}[rrrr]_{\hspace{2cm}\left(\left(g, h\right), \left(g', h'\right)\right)\mapsto\left(\diag\left(g, g'\right), \diag\left(h, h'\right)\right)}& &&&\prod_{\sigma\in\Sigma}\left(\gl_n(\IC)\times\gl_n(\IC)\right)
 }
\end{align}
\end{tiny}

We now generalize our discussion to the context of Shimura varieties.  We summarize the pertinent details here; a more detailed discussion is provided in \cite[Chapter 7]{hida}.  For any nondegenerate Hermitian form $\phi$ on a finite-dimensional vector space $\V$ over $K$, we use the notation
\begin{align*}
G^\phi:=GU^\phi.
\end{align*}
Fix a $G^\phi(\IR)$-conjugacy class $X^\phi$ of a fixed homomorphism of real algebraic groups
\begin{align*}
h_0: \IC^\times \hookrightarrow G^\phi
\end{align*}
over $\IR$.  The connected component of $X^\phi$ is isomorphic to the symmetric domain $G^\phi(\IR)/C^\phi_0$, where $C^\phi_0$ denotes the stabilizer of $h_0$ in $G^\phi(\IR)$, which is in turn isomorphic to copies of the symmetric domains $\mathfrak{Z}^\phi$ discussed above.

To the data $(G^\phi,X^\phi)$, as explained in in \cite[Chapter 7]{hida}, one may attach a Shimura variety\footnote{Actually, as noted in \cite[Section (1.2)]{HLS}, the data $\left(G^\phi, X^\phi\right)$ satisfies the axioms of \cite{deshimura} necessary for defining a Shimura variety, so long as $G^\phi(\IR)$ is not definite.  If $G^\phi(\IR)$ is definite, then one may attach a zero-dimensional Shimura variety to $\left(G^\phi, X^\phi\right)$, as explained in \cite{harriscrelle}.} $\Sh\left(G^\phi, X^\phi\right)$, whose $\IC$-points are given by
\begin{align*}
\varprojlim_{\mathcal{K}} G^\phi(\IQ)\backslash G^\phi(\adeles)/\cpct C^\phi_0,
\end{align*} 
where the projective limit runs over all open compact subgroups $\cpct$ of $G^\phi\left(\adeles^{(\infty)}\right)$.  As noted on \cite[p. 323]{hida}, the projective limit can be identified set-theoretically with
\begin{align*}
G^\phi(\adeles)\backslash\left(X^\phi\times G^\phi\left(\adeles^{(\infty)}\right)\right)/\overline{Z^\phi\left(\IQ\right)},
\end{align*}
where $Z^\phi$ denotes the center of $G^\phi$ and $\overline{Z^\phi\left(\IQ\right)}$ denotes the closure of $Z^\phi(\IQ)$ in $G^\phi\left(\adeles^{(\infty)}\right)$.  (The action\footnote{There is a typo in the description of this action given at the bottom of \cite[p. 323]{hida}.  We have corrected it here.} is given by $\gamma(x, g)z = (\gamma x, \gamma g z)$ for all $\gamma\in G^\phi(\IQ)$ and $z\in Z^\phi(\IQ)$.)  Also, note that
\begin{align*}
G^\phi(\IQ)\backslash G^\phi(\adeles)/\cpct C_0 = G(\IQ)\backslash\left(X^\phi\times G^\phi\left(\adeles^{(\infty)}\right)\right)/\cpct
\end{align*}
for each open compact subgroup $\cpct$ of $G^\phi\left(\adeles^{(\infty)}\right)$.  As explained on \cite[p. 317]{hida}, the connected component of the complex analytic space
\begin{align*}
\Sh_\cpct\left(G^\phi, X^\phi\right)(\IC):=G^\phi(\IQ)\backslash\left(X^\phi\times G^\phi\left(\adeles^{(\infty)}\right)\right)/\cpct
\end{align*}
is isomorphic to
\begin{align*}
\Gamma\backslash \mathfrak{Z}^\phi,
\end{align*}
where $\Gamma$ is the congruence subgroup of $G^\phi$ defined by 
\begin{align*}
\Gamma=\left(g\cpct g^{-1}G^\phi_+(\IR)\right)\cap G^\phi(\IQ)
\end{align*}
for an appropriate choice of $g\in G^\phi\left(\adeles^{(\infty)}\right)$.  

The Shimura variety $\Sh\left(G^\phi, X^\phi\right)$ represents a PEL moduli problem; the reader may consult any of the following references for details:  \cite[Chapter 7]{hida}, \cite[Chapters 1 and 2]{la},  \cite[Chapters 6 and 8]{milne}, and \cite[Section 2]{EDiffOps}.

Let 
\begin{align*}
G^{\phi, -\phi} = G\left(U^\phi\times U^{-\phi}\right).
\end{align*}
According to \cite[Section (1.2)]{HLS}, one can similarly define a Shimura datum $\left(G^{\phi, -\phi}, X^{\phi, -\phi}\right)$ so that there is a map of Shimura varieties
\begin{align*}
\Sh\left(G^{\phi, -\phi}, X^{\phi, -\phi}\right)\rightarrow\Sh\left(G^{\phi\oplus-\phi}, X^{\phi\oplus-\phi}\right).
\end{align*}

Slightly more generally, we may consider the following situation.  Suppose that $\cpct$ and $\cpct^{(p)}$ are {\it neat}, as defined in \cite[Definition 1.4.1.8]{la}.  Then the complex analytic spaces
\begin{align*}
\Sh_\cpct\left(G^\phi, X^\phi\right)(\IC)&:=G^\phi(\IQ)\backslash\left(X^\phi\times G^\phi\left(\adeles^{(\infty)}\right)\right)/\cpct\\
\Sh^{(p)}_\cpct\left(G^\phi, X^\phi\right)(\IC)&:=G^\phi(\IQ)\backslash\left(X^\phi\times G^\phi\left(\adeles^{(\infty)}\right)\right)/\tilde{G}^\phi_p\left(\ZZ_p\right)\cpct^{(p)},
\end{align*}
with $\tilde{G}^\phi_p\left(\ZZ_p\right)$ a hyperspecial maximal compact subgroup of $G^\phi(\IQ_p)$, are the complex points of smooth schemes $\Sh_\cpct\left(G^\phi, X^\phi\right)$ and $\Sh^{(p)}_\cpct\left(G^\phi, X^\phi\right)$, respectively, which represent a PEL moduli problem, i.e. they are moduli spaces for certain abelian schemes of PEL type (discussed in \cite[Chapter 7]{hida}, \cite[Chapters 1 and 2]{la},  \cite[Chapters 6 and 8]{milne}, and \cite[Section 2]{EDiffOps}).  These schemes are defined over $F$ and $\left(\mathcal{O}_F\right)_{(p)}$, respectively, where $F$ denotes the reflex field of $\left(G^\phi, X^\phi\right)$, as defined in \cite[Definition 1.2.5.4]{la} and \cite[Section 7.1.1]{hida}.
If $\cpct$ is a compact subgroup of $G^{\varphi\oplus-\varphi}(\adeles^{(\infty)})$ such that $\cpct\cap \left(G^{\varphi}(\adeles^{(\infty)})\times G^{-\varphi}(\adeles^{(\infty)})\right) =\cpct_1\times \cpct_2$, then there are inclusions
\begin{align}
\Sh_{\cpct_1}\left(G^\varphi, X^\varphi\right)\times\Sh_{\cpct_2}\left(G^{-\varphi}, X^{-\varphi}\right)&\hookrightarrow\Sh_\cpct\left(G^{\varphi\oplus-\varphi}, X^{\varphi\oplus-\varphi}\right)\label{avpullbk}\\
\Sh^{(p)}_{\cpct_1}\left(G^\varphi, X^\varphi\right)\times\Sh^{(p)}_{\cpct_2}\left(G^{-\varphi}, X^{-\varphi}\right)&\hookrightarrow\Sh^{(p)}_\cpct\left(G^{\varphi\oplus-\varphi}, X^{\varphi\oplus-\varphi}\right)\label{avpullbk2}
\end{align}
compatible with $\iota_\mathcal{K}$ and $\iota_{\varphi, \infty}$.

Over the ordinary locus $S_\phi$ of any of the above moduli spaces $M_\phi=\Sh^{(p)}_\cpct\left(G^\phi, X^\phi\right)$ of abelian schemes of PEL type, there is the Igusa tower, a formal scheme over $\mathcal{O}_F\otimes\ZZ_p$ that parametrizes ordinary abelian varieties (together with additional structure, namely a polarization, endomorphism, and level structure); and we have analogous pullbacks $\iota_{\varphi, p}$ in this case, compatible with the inclusions of groups $\iota_\varphi$ given above.  The reader can find a construction of the Igusa tower in \cite[Section 8.1]{hida}.  

\subsection{Automorphic forms}\label{autoforms-section}

Note that there is a natural action of $G^\phi(\IR)$ on $\mathfrak{Z}^\phi$ (described explicitly in terms of coordinates in \cite[Section 6.3]{sh}).  Given $\alpha\in G^\phi(\IR)$ and $z\in\mathfrak{Z}^\phi$, we write
\begin{align*}
\alpha z = \left(\alpha_vz_v\right)_{v\in\Sigma}
\end{align*}
to denote the action of $\alpha$ on $z$.  Note that there are also canonical automorphy factors (also described explicitly in terms of coordinates in \cite[Section 6.3]{sh}))
\begin{align*}
\mu_{\alpha}(z)&=\mu(\alpha, z)=\left(\mu(\alpha, z)\right)_{v\in\Sigma}\\
\lambda_\alpha(z)&=\lambda(\alpha, z)=\left(\lambda(\alpha, z)\right)_{v\in\Sigma},
\end{align*}
with
\begin{align*}
M_{\alpha}(z):=\left(\mu_{\alpha}(z), \lambda_{\alpha}(z)\right)\in \cpct^\phi(\IC).
\end{align*}
Given a Hermitian form $\phi$ as above, a complex vector space $\mathcal{W}$ together with a representation $\rho$ of $\cpct^\phi(\IC)$ on $\mathcal{W}$, and a function
\begin{align}\label{autform}
f: \mathfrak{Z}^\phi\rightarrow \mathcal{W},
\end{align}
we define (using the notation of \cite[Equation (12.6)]{shar})
\begin{align*}
\left(f||_{\rho}\alpha\right)(z):= \rho\left(M_{\alpha}(z)\right)^{-1}f(\alpha z).
\end{align*}
If $\mathcal{W}$ is one-dimensional and $(k, \nu) = \left(k(\sigma), \nu(\sigma)\right)_{\sigma\in\Sigma}\in \left(\ZZ\times\ZZ\right)^{\Sigma}$ is such that
 \begin{align}\label{autfactorrho}
 \rho\left(M_{\alpha}(z)\right) = \det\left(\mu_{\alpha}(z)\right)^{k+\nu} \det\left(\lambda_\alpha(z)\right)^{-\nu},
 \end{align}
 for all $\alpha$, then we define
 \begin{align*}
 f||_{k, \nu} := f||_{\rho}.
 \end{align*}
When 
\begin{align}\label{symmspaceautomorphy}
f||_{\rho}\alpha = f
\end{align}
 for all $\alpha$ in some congruence subgroup $\Gamma$ of $G^{\phi}$, we say that $f$ is a {\it $\mathcal{W}$-valued automorphic form of weight $\rho$} and level $\Gamma$ on $\mathfrak{Z}^\phi$.  If
 \begin{align*}
 f||_{k, \nu}\alpha = f,
 \end{align*}
 for all $\alpha$ in some congruence subgroup $\Gamma$ of $G^{\phi}$, we say that $f$ is a {\it $\IC$-valued automorphic form of weight $(k, \nu)$} and level $\Gamma$ on $\mathfrak{Z}^\phi$. 

Note that if $\Gamma_1$ and $\Gamma_2$ are congruence subgroups of $G^\varphi$ and $G^{-\varphi}$, respectively, such that
\begin{align*}
\Gamma_1\times \Gamma_2 = \Gamma\cap\left(G^\varphi\times G^{-\varphi}\right)
\end{align*}
 and $f$ is a $\mathcal{W}$-valued automophic form of weight $\rho$ and level $\Gamma$ on $\hn$, then
 \begin{align*}
 f\circ\iota_{\varphi, \infty}: \mathfrak{Z}^\varphi\times\mathfrak{Z}^{-\varphi}&\rightarrow \mathcal{W}\\
 (z, w)&\mapsto  f\circ\iota_{\varphi, \infty}(z, w)
 \end{align*}
 satisfies
 \begin{align*}
 \left( f\circ\iota_{\varphi, \infty}\right)||_{\left(\rho|_{\cpct^\varphi(\IC)\times\cpct^{-\varphi}(\IC)}\right)}\left(\alpha, \beta\right) = f\circ\iota_{\varphi, \infty}
 \end{align*}
 for all $(\alpha, \beta)$ in $\Gamma_1\times \Gamma_2\subseteq\Gamma$.  In particular, the pullback $f\circ\iota_{\varphi, \infty}$ of $f$ to $\mathfrak{Z}^\varphi\times \mathfrak{Z}^{-\varphi}$ is an automorphic form in each of the variables $z$ and $w$.

Given a representation $\left(\omega, \mathcal{U}\right)$ of $\cpct^\varphi(\IC)\times \cpct^{-\varphi}(\IC)$ and
\begin{align*}
f: \mathfrak{Z}^\varphi\times\mathfrak{Z}^{-\varphi}\rightarrow \mathcal{U},
\end{align*}
we define
\begin{align*}
f||_\omega: \mathfrak{Z}^\varphi\times\mathfrak{Z}^{-\varphi}&\rightarrow \mathcal{U}\\
(z, w)&\mapsto \omega\left(M_{\alpha}(z), M_\beta(w)\right)f(\alpha z, \beta w).
\end{align*}

Now, suppose $\left(\rho', \mathcal{W}'\right)$ is a subrepresentation of $\left(\rho|_{\cpct^\varphi(\IC)\times\cpct^{-\varphi}(\IC)}, \mathcal{W}\right)$.  Choose a projection $\pi_{\mathcal{W}'}$ onto the subspace $\mathcal{W}'$.  (Note that this projection might not be unique, since $\mathcal{W}'$ might not be of multiplicity $1$ in $\mathcal{W}$.)  Then
\begin{align*}
\left(  \pi_{\mathcal{W}'}\circ f\circ\iota_{\varphi, \infty}\right)||_{\rho'}\left(\alpha, \beta\right) =\pi_{\mathcal{W}'}\circ f\circ\iota_{\varphi, \infty}
 \end{align*}
 for all $(\alpha, \beta)$ in $\Gamma_1\times \Gamma_2\subseteq\Gamma$.  In particular, 
\begin{align*}
\pi_{\mathcal{W}'}\circ f\circ\iota_{\varphi, \infty}: \mathfrak{Z}^\varphi\times \mathfrak{Z}^{-\varphi}\rightarrow \mathcal{W}'
\end{align*}
is an automorphic form in each of the variables $z$ and $w$.  Note that $\pi_{\mathcal{W}'}\circ f$ is not necessarily an automorphic form on the larger space $\hn$.

Alternatively, one can define a $\mathcal{W}$-valued automorphic form on $G^\phi$ to be a function on $G^\phi$ satisfying an automorphy condition compatible with the one given in Equation \eqref{symmspaceautomorphy}.  This is the approach taken in \cite[A8.2]{shar}.  It is also the approach taken in parts of \cite{apptoSHL, apptoSHLvv}, which contain some of the main results upon which our proofs in the current paper rely; we do not, however, need the details of this perspective in the current paper.  In this context, we once again have pullbacks of automorphic forms and can define natural analogues of $f\circ\iota_{\varphi, \infty}$ and $\pi_{\mathcal{W}'}\circ f\circ\iota_{\varphi, \infty}$ compatible with the inclusions $\iota_{\varphi}$ and $\iota_\cpct$.

For a more general treatment of automorphic forms, it is most convenient to define automorphic forms as functions on moduli spaces of abelian varieties of PEL type, i.e. as sections of vector bundles over the moduli spaces $M_\phi:=\Sh_\cpct\left(G^\phi, X^\phi\right)$ (or $\Sh_\cpct^{(p)}\left(G^\phi, X^\phi\right)$).  
This is the approach of much of \cite{EDiffOps}, which gives a detailed treatment of automorphic forms from this perspective and the connection with the definition (from the perspective of symmetric spaces) used above.  Note that this is a natural setting in which to consider automorphic forms over rings other than $\IC$, as well as $\IC$; in particular, it is a more natural setting in which to study $p$-adic properties of automorphic forms.

Similarly to the above settings, we can also discuss pullbacks in this context; in this setting, we have that if $f$ is an automorphic form on $\Sh_\cpct\left(G^{\varphi\oplus-\varphi}, X^{\varphi\oplus-\varphi}\right)$ (or $\Sh^{(p)}_\cpct\left(G^{\varphi\oplus-\varphi}, X^{\varphi\oplus-\varphi}\right)$), then the pullback of $f$ under the map in Expression \eqref{avpullbk} (or \eqref{avpullbk2}) is an automorphic form.  So we may define natural analogues $f\circ\iota_{\Sh_{\cpct_1}\times\Sh_{\cpct_2}}$ and $\pi_{\mathcal{W}'}\circ f\circ\iota_{\Sh_{\cpct_1}\times\Sh_{\cpct_2}}$ of $f\circ\iota_{\varphi, \infty}$ and $\pi_{\mathcal{W}'}\circ f\circ\iota_{\varphi, \infty}$, respectively, where $\iota_{\Sh_{\cpct_1}\times\Sh_{\cpct_2}}$ denotes either of the two inclusions \eqref{avpullbk} and \eqref{avpullbk2}.  Note that $\pi_{\mathcal{W}'}\circ f\circ\iota_{\Sh_{\cpct_1}\times\Sh_{\cpct_2}}$ takes as its input a point 
\begin{align*}
\left(\uA_1\times \uA_2, \left(\uo_1, \uo_2\right)\right),
\end{align*}
where $\uA_1$ and $\uA_2$ are (isogeny classes of) abelian varieties over a scheme $S$ over $F$ (or over a scheme $S$ over $\mathcal{O}_F\otimes\ZZ_{(p)}$), together with a polarization, level structure, and endomorphism corresponding to the moduli problems represented by $\Sh_{\cpct_1}\left(G^\varphi, X^\varphi\right)$ and $\Sh_{\cpct_2}\left(G^{-\varphi}, X^{-\varphi}\right)$ (or $\Sh_{\cpct_1}^{(p)}\left(G^\varphi, X^\varphi\right)$ and $\Sh_{\cpct_2}^{(p)}\left(G^{-\varphi}, X^{-\varphi}\right)$), and $\uo_1$ and $\uo_2$ are ordered choices of bases for $\uo_{\uA_1/S}$ and $\uo_{\uA_2/S}$, respectively.

Similarly, $p$-adic automorphic forms are certain sections over the Igusa tower, and we have analogous pullbacks $\iota_{\varphi, p}$ in this case, compatible with the inclusions of groups $\iota_\varphi$ given above.  The reader can find a detailed discussion of $p$-adic automorphic forms in \cite[Chapter 8]{hida} and \cite{HLS}.  Let $R$ be a $p$-adic ring.  In what follows, for each Hermitian form $\varphi$, we let $\mathcal{M}_\rho\left(\padic, U^\varphi, R\right)$ denote Hida's space of {\it false} automorphic forms of weight $\rho$ (\cite[p. 329]{hida}), viewed as sections of a vector bundle over the ordinary locus $S_\varphi$, and we define $\mathcal{M}_\rho\left(\padic, U^\varphi\times U^{-\varphi}, R\right)$ analogously.  We define $\mathcal{M}_\rho\left(\padic, U^\varphi\right) := \mathcal{M}_\rho\left(\padic, U^\varphi, \OCp\right)$ and $\mathcal{M}_\rho\left(\padic, U^\varphi\times U^{-\varphi}\right):=\mathcal{M}_\rho\left(\padic, U^\varphi\times U^{-\varphi}, \OCp\right)$.

\section{Differential operators}\label{diffops-section}
\subsection{$\ci$-differential operators}\label{cidiffops-section}

In \cite{EDiffOps}, we discussed certain weight-raising $\ci$-differential operators that act on $\ci$-automorphic forms on unitary groups of signature $(n,n)$, i.e. automorphic forms on groups isomorphic to $GU^{\varphi\oplus-\varphi}$.  These operators are constructed geometrically, using the Gauss-Manin connection and the Kodaira-Spencer morphism.  The operators act on automorphic forms on $GU^{\varphi\oplus-\varphi}$, viewed as sections of a vector bundle of automorphic forms over the complex points of a moduli space of abelian schemes of PEL type (i.e. the analytification of a Shimura variety).  

On the other hand, Shimura constructed $\ci$-differential operators that act on $\ci$-automorphic forms on unitary groups, viewed as functions on $\hn$, as in map \eqref{autform}, or as functions on unitary groups \cite{sh84, shclassical, sh, shar}.  As explained in \cite{EDiffOps}, Shimura's $\ci$-operators and the $\ci$-operators in \cite{EDiffOps} are the same (once we identify the complex space $\Sh_\cpct\left(G^{\varphi\oplus-\varphi}, X^{\varphi\oplus-\varphi}\right)(\IC)$ with the complex space $\Gamma\backslash\hn$,  as explained above).
                    
Let $\mathcal{M}_\rho\left(\ci, U^\eta\right)$ denote the space of $\ci$-automorphic forms of weight $\rho$ on $U^\eta$, where $\rho$ denotes a representation of the complexification of a maximal compact subgroup $\mathcal{K}$ of $U^{\varphi\oplus-\varphi}(\IR)$.  Similarly, if $\rho$ is instead a representation of the complexification of $\cpct_1\times\cpct_2$, we write $\mathcal{M}_{\rho}\left(\ci, U^\varphi\times U^{-\varphi}\right)$ to denote the space of $\ci$-automorphic forms of weight $\rho$ on $U^\varphi\times U^{-\varphi}$.  When $\rho$ is as in Equation \eqref{autfactorrho}, we define
\begin{align*}
\mathcal{M}_{k, \nu}\left(\ci, U^{\varphi\oplus-\varphi}\right) := \mathcal{M}_{\rho}\left(\ci, U^{\varphi\oplus -\varphi}\right).
\end{align*}
Also, if $\rho$ is a representation of the complexification of $\cpct^{\varphi\oplus-\varphi}\cap\left(U^\varphi\times U^{-\varphi}\right)$ such that
\begin{align*}
\rho\left(M_\alpha(z), M_\beta(w)\right) = \det\left(\mu_{\alpha}(z)\right)^{k+\nu} \det\left(\lambda_\alpha(z)\right)^{-\nu}\det\left(\mu_{\beta}(w)\right)^{k+\nu} \det\left(\lambda_\beta(w)\right)^{-\nu},
\end{align*}
then we define
\begin{align*}
\mathcal{M}_{k, \nu}\left(\ci, U^\varphi\times U^{-\varphi}\right) := \mathcal{M}_{\rho}\left(\ci, U^\varphi\times U^{-\varphi}\right).
\end{align*}
If instead
\begin{align*}
\rho\left(M_\alpha(z), M_\beta(w)\right) = \det\left(\mu_{\alpha}(z)\right)^{k+\nu} \det\left(\lambda_\alpha(z)\right)^{-\nu}\det\left(\mu_{\beta}(w)\right)^{k'+\nu'} \det\left(\lambda_\beta(w)\right)^{-\nu'},
\end{align*}
then we define
\begin{align*}
\mathcal{M}_{\left(k, \nu\right),\left(k', \nu'\right)}\left(\ci, U^\varphi\times U^{-\varphi}\right) := \mathcal{M}_{\rho}\left(\ci, U^\varphi\times U^{-\varphi}\right).
\end{align*}

Let $m =\left(m\left(\sigma\right)\right)_{\sigma\in\Sigma}\in\ZZ_{\geq 0}^\Sigma$.  In \cite[Sections 23.6-23.14]{sh} and \cite[Section 29]{shar}, Shimura uses the $\ci$ -differential operators mentioned above to construct operators
\begin{align}\label{shamk}
A_m^k \left(= \prod_{\sigma\in\Sigma}A_{m}^{k}\left(\sigma\right)\right): \left\{f: \hn\rightarrow \IC\right\}\longrightarrow \left\{f: \mathfrak{Z}^\varphi\times\mathfrak{Z}^{-\varphi}\rightarrow \IC\right\}
\end{align}
such that if $f$ is a $\IC$-valued $\ci$-automorphic form of weight $(k, \nu)$, then $A_m^kf = \prod_{\sigma\in\Sigma}A_{m}^{k}\left(\sigma\right)f$ is an element of $\mathcal{M}_{\left(\left(k+m, \nu\right),\left(k+m, \nu-m\right)\right)}\left(\ci, U^\varphi\times U^{-\varphi}\right)$.  Shimura applies these operators to certain Eisenstein series on $\hn$ to obtain automorphic forms with particular properties necessary to complete some of his proofs about zeta functions.

We now present an alternate construction of the operators $A_m^k$, via geometry, in the process of which we will create a natural generalization of the operators $A_m^k$.  

\begin{rmk}
We are purposely ambiguous here about whether we are viewing $\ci$-automorphic forms as functions on $\mathfrak{Z}^\eta$ or as sections of a vector bundle on a moduli space.  As explained in \cite{EDiffOps}, these two perspectives can be identified with each other, and the $\ci$-differential operators defined in \cite{EDiffOps} and below agree with each other under this identification.  So we can pass freely between these two perspectives in our discussion of weight-raising differential operators on $\ci$-automorphic forms.  

Furthermore, as explained in \cite[A8]{shar}, there is a natural identification of $\ci$automorphic forms on $\hn$ with $\ci$-automorphic forms viewed as functions on the unitary group; and the $\ci$-differential operators in each setting agree with each other under this identification.  So, in fact, we may actually pass naturally between all three settings.

Below, by abuse of notation, we shall write $\iota_\varphi$ to mean either the compatible inclusion $\iota_{\varphi, \infty}$ on symmetric spaces defined in \eqref{symmspaceinclusion}, or a corresponding compatible inclusion on moduli spaces, or a corresponding compatible inclusion of groups.
\end{rmk}

We now recall information from \cite[Section 4.1]{apptoSHLvv} that we will need below.  For each $\sigma\in\Sigma$, let $\mathcal{T}_n(\sigma)$ denote the tangent space of $\h{n,\sigma}$.  We identify $\mathcal{T}_n(\sigma)$ with $M_{n\times n}(\IC)$, for all $\sigma\in\Sigma$.  Define
\begin{align*}
\mathcal{T}_n &= \prod_{\sigma\in\Sigma}\mathcal{T}_n(\sigma).
\end{align*}
For each nonnegative integer $d$, we denote by $\mathfrak{S}_d\left(\mathcal{T}_n(\sigma)\right)$ the $\IC$-vector space of $\IC$-valued homogeneous polynomial functions on $\mathcal{T}_n(\sigma)=M_{n\times n}(\IC)$ (equality under our identification of $\mathcal{T}_n(\sigma)$ with $M_{n\times n}(\IC)$) of degree $d$.  So, for example, $\det^d\in\mathfrak{S}_{nd}\left(\mathcal{T}_n(\sigma)\right)$, for any $d\in \ZZ_{\geq 0}^\Sigma$.  (By $\det^d(z)$, we mean $\prod_{\sigma\in\Sigma}\det z^{d(\sigma)}$)  For each $d =\left(d(\sigma)\right)_{\sigma\in\Sigma}\in\ZZ_{\geq 0}$, we define
\begin{align*}
\mathfrak{S}_d\left(\mathcal{T}_n\right) = \otimes_{\sigma\in\Sigma}\mathfrak{S}_{d(\sigma)}\left(\mathcal{T}_n(\sigma)\right).
\end{align*}
Following the notation of \cite{shar}, we denote by $\tau^d$ the representation of $\prod_{\sigma\in\Sigma}\left(\gln(\IC)\times\gln(\IC)\right)$ on $\mathfrak{S}_d\left(\mathcal{T}_n\right)$ defined by
\begin{align*}
\tau^d\left(\alpha, \beta\right) g(z) := g\left({ }^t\alpha z \beta\right)
\end{align*}
for all $\left(\alpha, \beta\right)\in\prod_{\sigma\in\Sigma}\left(\gln(\IC)\times\gln(\IC)\right)$, $z\in \mathcal{T}_n$, and $g\in\mathfrak{S}_d\left(\mathcal{T}_n\right)$.

 Let $d = \left(d(\sigma)\right)_{\sigma\in\Sigma}$ be an element of $\ZZ_{\geq 0}^\Sigma$.  As explained in extensive detail in \cite{kaCM, shar, EDiffOps}, there are ``weight-raising'' $\ci$-differential operators
\begin{align}\label{weightraisingnn}
D_{\rho}^d\left(\ci\right): \mathcal{M}_\rho\left(\ci, U^{\varphi\oplus-\varphi}\right)\rightarrow \mathcal{M}_{\rho\otimes\tau^d}\left(\ci, U^{\varphi\oplus-\varphi}\right),
\end{align}
which generalize the Maass-Shimura operators from the case of modular forms to the case of automorphic forms on unitary groups.  We briefly outline the key ingredients in the construction (from \cite{EDiffOps}) of the differential operators $D_\rho^d\left(\ci\right)$, which extends the construction of the $\ci$ differential operators for Hilbert modular forms in \cite[Section 2.3]{kaCM} to the case of automorphic forms on unitary groups of signature $(n,n)$.  Fix a maximal compact subgroup $\cpct$, and consider the moduli space $M_\phi=\Sh^{(p)}_\cpct\left(G^\phi, X^\phi\right)$ of abelian schemes of PEL type, as above, viewed over a base scheme $S$.  Denote by $\Auniv$ the universal abelian variety over $M_\phi$.  Adhering to the usual conventions, define $\uo:=\pi_*\left(\Omega^1_{\Auniv/M_{\phi}}\right)$ and $\hdr:=\mathbf{R}^1\pi_*\left(\Omega^\bullet_{\Auniv/M_{\phi}}\right)$ (where $\Omega_{\Auniv/M_{\phi}}^i$ denotes the sheaf of invariant $n$-forms for each positive integer $i$).  There sheaf $\uo$ decomposes as $\uo=\oplus_{\sigma\in\Sigma}\uo_\sigma^+\oplus \uo_\sigma^-$.  The Kodaira-Spencer morphism gives an isomorphism 
\begin{align}\label{ks-iso}
\ks:\Omega^1_{M_\phi/S}\isomto \oplus_{\sigma\in\Sigma}\uo_\sigma^+\otimes\uo_\sigma^-.
\end{align}

  We identify $\uo$ with its image in $\hdr$ under the inclusion $\uo\hookrightarrow\hdr$ coming from the Hodge filtration.  The Gauss-Manin connection $\nabla:\hdr\rightarrow\hdr\otimes\Omega^1_{M_\phi/S}$ 
  extends via the Leibniz rule (i.e. the product rule) to a connection, also denoted by $\nabla$, $\nabla: {\hdr}^{\otimes e}\rightarrow{\hdr}^{\otimes e}\otimes\Omega^1_{M_\phi/S}$ for each positive integer $e$.
  Define $D:=\left(\id\otimes\ks\right)\circ\nabla$, where $\id$ denotes the identity map on ${\hdr}^{\otimes e}$.  The map $D$ decomposes over $\sigma\in \Sigma$ as a direct sum of maps
  \begin{align*}
D(\sigma): {\hdr}^{\otimes e}\rightarrow{\hdr}^{\otimes e}\otimes\uo_\sigma^+\otimes\uo_\sigma^-.
  \end{align*}
Under the identification $\uo_\sigma^+\otimes\uo_\sigma^-\subseteq \uo^{\otimes 2}\subseteq \hdr\otimes\hdr$, define $D(\ell\sigma)$ to be the composition of $D(\sigma)$ with itself $\ell$ times, for each nonnegative integer $\ell$.  Define $D^d:=\oplus_{\sigma\in\Sigma}D(d\sigma)$.

When $S=\Spec\IC$, consider the $\ci$-manifold $M_\phi\left(\ci\right)$ underlying the complex manifold consisting of $\IC$-valued points of $M_\phi$.  For each sheaf $\mathcal{F}$ on $M_\phi$, denote by $\mathcal{F}\left(\ci\right)$ the sheaf on $M_\phi\left(\ci\right)$ obtained by tensoring $\mathcal{F}$ with the $\ci$-structural sheaf on $M_\phi\left(\ci\right)$.  Over $M_\phi\left(\ci\right)$, there is the Hodge decomposition
\begin{align*}
\hdr\left(\ci\right) = \uo\left(\ci\right)\oplus\overline{\uo\left(\ci\right)},
\end{align*}
where the bar denotes complex conjugation.  Note that $D$ extends naturally to a map on ${\hdr}^{\otimes e}\left(\ci\right)$.  Define $\pi_{\ci}: \hdr\left(\ci\right)\rightarrow \uo\left(\ci\right)$ to be projection modulo $\overline{\uo\left(\ci\right)}$, and also denote by $\pi_{\ci}$ the induced map $\hdr\left(\ci\right)^{\otimes e}\rightarrow \uo\left(\ci\right)^{\otimes e}$.  Define 
\begin{align*}
D^d\left(\ci\right):=\pi_{\ci}\circ D^d.
\end{align*}
Let $\rho$ be an irreducible polynomial representation of the complexification of a maximal compact subgroup $\mathcal{K}$ of $U^{\varphi\oplus-\varphi}(\IR)$.  Then the sheaf of automorphic forms of weight $\rho$ can be identified with a subsheaf $\uo^\rho$ of $\uo^{\otimes e}$ for an appropriate integer $e$, and the restriction of $D^d\left(\ci\right)$ to $\uo^\rho\left(\ci\right)$ gives a map $D_\rho^d\left(\ci\right)$ from the space of $\ci$ automorphic forms of weight $\rho$ to the space of $\ci$ automorphic forms of weight $\rho\otimes\tau^d$.

\begin{defi}\label{pllbkopdefi} Let $d = \left(d(\sigma)\right)_{\sigma\in\Sigma}$ be an element of $\ZZ_{\geq 0}^\Sigma$.  Let $\rho$ be a polynomial representation of the complexification of a maximal compact subgroup $\mathcal{K}$ of $U^{\varphi\oplus-\varphi}(\IR)$, and let $\gamma$ be a subrepresentation of the restriction of $\rho\otimes \tau^d$ to the complexification of $\mathcal{K}\cap \left(U^\varphi\oplus U^{-\varphi}\right)$.   Fix a projection $\pi_\gamma$ from $\rho$ onto $\gamma$.  We define an operator
\begin{align*}
A_{\rho}^\gamma\left(\ci\right): \mathcal{M}_\rho\left(\ci, U^{\varphi\oplus-\varphi}\right)\rightarrow \mathcal{M}_{\gamma}\left(\ci, U^\varphi\times U^{-\varphi}\right)
\end{align*}
by
\begin{align*}
A_\rho^\gamma(\ci)(f):=\pi_\mathcal{\gamma}\circ \left(D_\rho^d\left(\ci\right) f\right)\circ \iota_{\varphi, \infty}.
\end{align*}
\end{defi}

\begin{warning}
The $\ci$-function $\pi_\mathcal{\gamma}\circ \left(D_\rho^d\left(\ci\right) f\right)$ is typically not an automorphic form on $U^{\varphi\oplus-\varphi}$, since $\gamma$ is typically not a representation of the complexification of $\mathcal{K}$.
\end{warning}

\subsection{Highest weight vectors and irreducible polynomial representations}\label{highwtirrpoly-section}

We now build upon the brief discussion of polynomial representations in \cite[Section 4.1]{apptoSHLvv}.  More extensive details about these representations are available in \cite[Section 2]{shclassical} and \cite[Sections 12.6 and 12.7]{shar}.  Given positive integers $j \leq l$ and an $l\times m$ matrix $a$, we denote by ${\det}_j(a)$ the determinant of the upper left $j\times j$ submatrix of $a$.  Each polynomial representation of $\gl_l(\IC)$ can be decomposed into a direct sum of irreducible representations $\rho$ of $\gl_l(\IC)$.  These irreducible representations are indexed by highest weights, i.e. there is a one-to-one correspondence between the irreducible polynomial representations of $\gl_l(\IC)$ and the ordered $l$-tuples $r_1\geq \cdots\geq r_l\geq 0$ of integers dependent upon $\rho$.  Each irreducible polynomial representation $\rho$ of $\gl_l(\IC)$ contains a unique eigenvector $p$ of highest weight $r_1\geq \cdots\geq r_l\geq 0$ (for a unique ordered $l$-tuple $r_1\geq \cdots r_l\geq 0$ of integers dependent upon $\rho$), i.e. a common eigenvector of the upper triangular matrices of $\gl_n(\IC)$ that satisfies
\begin{align}
\rho(a) p &= \prod_{j=1}^{l}{\det}_j(a)^{e_j} p\nonumber\\
e_j&= r_j-r_{j+1}, 1\leq j\leq l-1\label{ej1}\\
e_l & = r_l,\label{ej2}
\end{align}
for all $a$ in the subgroup of upper triangular matrices in $\gl_l(\IC)$. 

\subsection{Subrepresentations and restrictions of $\tau^d$}\label{subrepres}
As above, for each $\sigma\in\Sigma$, we identify $\left(\gl_{a_\sigma}\left(\IC\right)\times\gl_{b_\sigma}(\IC)\right)\times\left(\gl_{b_\sigma}\left(\IC\right)\times\gl_{a_\sigma}(\IC)\right)$ with its image in $\gln(\IC)\times\gln(\IC)$ under the embedding
 \begin{align*}
\left(\gl_{a_\sigma}\left(\IC\right)\times\gl_{b_\sigma}(\IC)\right)\times\left(\gl_{b_\sigma}\left(\IC\right)\times\gl_{a_\sigma}(\IC)\right)\hookrightarrow\prod_{\sigma\in\Sigma}\left(\gln(\IC)\times\gln(\IC)\right).
\end{align*}
given in the diagram \eqref{commdiag}.  Generalizing this slightly, we also consider the case in which $\IC$ is replaced by an arbitrary ring $S$.  (For us, $S$ will always be $\IC$ or a $p$-adic ring.)

We now discuss the irreducible subrepresentations of the representation $\tau^d$ of $\prod_{\sigma\in\Sigma}\left(\gln(\IC)\times\gln(\IC)\right)$, as well as the subrepresentations of the restriction of $\tau^d$ to 
the subgroup $\prod_{\sigma\in\Sigma}\left(\gl_{a_\sigma}\left(\IC\right)\times\gl_{b_\sigma}(\IC)\right)\times\left(\gl_{b_\sigma}\left(\IC\right)\times\gl_{a_\sigma}(\IC)\right)$ of $\prod_{\sigma\in\Sigma}\left(\gln(\IC)\times\gln(\IC)\right)$.

Generalizing our notation from above, for any positive integers $d$, $r$, and $s$ and any ring $R$ of $r\times s$ matrices over a ring $S$, we denote by $\mathfrak{S}_d\left(R\right)$ the $S$-module of all $S$-valued homogeneous polynomial functions on $R$ of degree $d$.  (Above, we only considered the case in which $r=s=n$, $S=\IC$, and $R = M_{n\times n}(\IC)$.)  

For all positive integers $r$, $s$ and $d$, we define a representation $\tau^d_{r, s}$ of $\gl_r(S)\times\gl_s(S)$ on the $S$-module $\mathfrak{S}_d\left(M_{r\times s}\left(S\right)\right)$ by
\begin{align*}
\tau^d\left(\alpha, \beta\right)g(z) = g\left({ }^t\alpha z \beta\right)
\end{align*}
for all $\alpha\in\gl_r(S)$, $\beta\in\gl_s(S)$, $z\in M_{r\times s}\left(S\right)$, and $g\in \mathfrak{S}_d\left(M_{r\times s}\left(S\right)\right)$.  Observe that
\begin{align*}
\tau^d = \otimes_{\sigma\in\Sigma}\tau^d_{n, n}.
\end{align*}

The following theorem, whose proof is due to \cite{hua, johnson} and appears in \cite{shclassical}, describes the decomposition of $\tau^d_{r, s}$ into irreducible subrepresentations.
\begin{thm}[Theorem 12.7 in \cite{shar}]\label{thm127}
Let $d$, $q$, and $s$ be positive integers.  Let $\rho$ and $\gamma$ be irreducible representations of $\gl_q(\IC)$ and $\gl_s(\IC)$, respectively.  Then $\rho\otimes\gamma$ occurs in $\tau_{q, s}^d$ if and only if $\rho$ and $\gamma$ have highest weights
\begin{align*}
r_1\geq\cdots\geq r_s\geq 0\geq \cdots\geq 0 \mbox{ and } r_1\geq \cdots \geq r_s &\hspace{1cm} \mbox { if } q\geq s\\
r_1\geq\cdots\geq r_q \mbox{ and } r_1\geq \cdots \geq r_q\geq 0\geq \cdots\geq 0 & \hspace{1cm} \mbox { if }q\leq s,
\end{align*}
respectively,
with
\begin{align*}
r_1+\cdots +r_\mu = d,
\end{align*}
where $\mu = \min\left\{q, s\right\}$.  Furthermore, such a representation $\rho\otimes \gamma$ occurs in $\tau^d_{q, s}$ with multiplicity one.  The corresponding irreducible subspace of $\mathfrak{S}_d\left(M_{q\times s}(\IC)\right)$ contains a polynomial $\mathcal{P}$ defined by
\begin{align*}
\mathcal{P}(z) = \prod_{j=1}^\mu{\det}_j(z)^{e_j},
\end{align*}
as an eigenvector of highest weight with respect to both $\rho$ and $\gamma$, where $e_j$ is defined as in Equations \eqref{ej1} and \eqref{ej2} for $1\leq j\leq \mu$.
\end{thm}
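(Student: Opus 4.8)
The plan is to identify this as the classical $(\gl_q,\gl_s)$-Howe duality (Cauchy's identity), with the explicit highest weight vector then verified by a short computation with leading principal minors. First I would set up coordinates: the entries $z_{ij}$ of the generic matrix $z\in M_{q\times s}(\IC)$ form a basis of the dual space $M_{q\times s}(\IC)\dual$, so that $\mathfrak{S}_d(M_{q\times s}(\IC)) = \sym^d\!\left(M_{q\times s}(\IC)\dual\right)$, and I would fix bases to identify $M_{q\times s}(\IC)\dual\cong\IC^q\otimes\IC^s$ via $z_{ij}\leftrightarrow x_i\otimes y_j$. A one-line check from $\tau^d_{q,s}(\alpha,\beta)z_{ij} = \left({}^{t}\alpha\,z\,\beta\right)_{ij} = \sum_{k,l}\alpha_{ki}\,z_{kl}\,\beta_{lj}$ shows that on the degree-one piece $\tau^d_{q,s}(\alpha,\beta)$ is exactly the standard tensor action $\alpha\otimes\beta$ of $\gl_q(\IC)\times\gl_s(\IC)$ on $\IC^q\otimes\IC^s$; the transpose in Shimura's definition is precisely what makes $\tau^d_{q,s}$ a homomorphism rather than an anti-homomorphism (the contravariance of $g\mapsto g(\,\cdot\,)$ and the anti-multiplicativity of transpose cancel). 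Hence $\tau^d_{q,s}$ is $\sym^d$ of the standard action on $\IC^q\otimes\IC^s$.

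Second, I would invoke Cauchy's identity: $\sym\!\left(\IC^q\otimes\IC^s\right) = \bigoplus_\lambda \mathbb{S}_\lambda(\IC^q)\otimes\mathbb{S}_\lambda(\IC^s)$, where $\mathbb{S}_\lambda(\IC^\ell)$ denotes the irreducible polynomial representation of $\gl_\ell(\IC)$ of highest weight $\lambda$, the sum running over all partitions $\lambda$ with at most $\mu:=\min\{q,s\}$ nonzero parts, each summand irreducible and occurring exactly once. Passing to the degree-$d$ part restricts the sum to $|\lambda|=d$. Combined with the first step, this is already the claimed multiplicity-free decomposition of $\mathfrak{S}_d(M_{q\times s}(\IC))$ under $\tau^d_{q,s}$; the restriction "at most $\mu$ parts," read through the conventions of Section \ref{highwtirrpoly-section} (highest weights of $\gl_\ell(\IC)$ are $\ell$-tuples), is exactly what produces the two cases $q\ge s$ and $q\le s$ in the statement, with the highest weight on the larger of the two factors being $\lambda$ padded out with zeros.

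Third, I would produce the explicit highest weight vector. For $\lambda = (r_1\ge\cdots\ge r_\mu\ge 0)$ with $|\lambda| = d$, set $e_j = r_j - r_{j+1}$ for $j<\mu$, $e_\mu = r_\mu$, and $\mathcal{P}(z) = \prod_{j=1}^{\mu}{\det}_j(z)^{e_j}$; this is well defined since $j\le\mu\le\min\{q,s\}$, and $\deg\mathcal{P} = \sum_{j=1}^{\mu} j\,e_j = \sum_{j=1}^{\mu} r_j = d$. For upper triangular $\alpha\in\gl_q(\IC)$ and $\beta\in\gl_s(\IC)$, the matrix ${}^{t}\alpha$ is lower triangular, and a block computation on the leading $j\times j$ corner shows that for any product $L\,z\,U$ with $L$ lower triangular and $U$ upper triangular one has ${\det}_j(L\,z\,U) = {\det}_j(L)\,{\det}_j(z)\,{\det}_j(U)$; hence $\tau^d_{q,s}(\alpha,\beta)\mathcal{P} = \left(\prod_{j=1}^{\mu}{\det}_j(\alpha)^{e_j}\right)\left(\prod_{j=1}^{\mu}{\det}_j(\beta)^{e_j}\right)\mathcal{P}$. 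Comparing with \eqref{ej1}--\eqref{ej2}, this exhibits $\mathcal{P}$ as a common eigenvector of the upper triangular matrices in both factors of highest weight $(r_1,\dots,r_\mu,0,\dots,0)$ on each side (truncated to length $\mu$ on the smaller factor) — precisely the pair of highest weights asserted. Since $\mathcal{P}\ne 0$, the constituent $\mathbb{S}_\lambda(\IC^q)\otimes\mathbb{S}_\lambda(\IC^s)$ indeed occurs, and by the preceding step it occurs with multiplicity one and there are no others, which completes the proof.

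Since the only non-formal input is Cauchy's identity itself — which I would quote from \cite{hua, johnson, shclassical} rather than reprove — there is no deep obstacle. The one place to be careful is the first step: one must track the transpose in the definition of $\tau^d_{q,s}$ and the dualization passing from $M_{q\times s}(\IC)$ to $\IC^q\otimes\IC^s$, so as to be certain that $\tau^d_{q,s}$ really is the \emph{standard} (untwisted) $\sym^d$ action — and hence that the two highest weights come out as genuine non-negative partitions, padded with exactly the right number of zeros in each of the cases $q\ge s$ and $q\le s$.
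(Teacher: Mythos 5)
Your proof is correct, but note that the paper does not actually prove this statement: it is quoted verbatim as Theorem 12.7 of Shimura's book, with the decomposition attributed to Hua and Johnson, so there is no in-paper argument to compare against. What you have written is the standard modern packaging of that classical result as $(\gl_q,\gl_s)$-Howe duality. Your step identifying $\tau^1_{q,s}$ with the untwisted action $\alpha\otimes\beta$ on $\IC^q\otimes\IC^s$ is the right thing to check and your index computation ($\tau(\alpha,\beta)z_{ij}=\sum_{k,l}\alpha_{ki}\beta_{lj}z_{kl}$, matching the matrix coefficients of $\alpha\otimes\beta$) is accurate; the Cauchy identity then gives the multiplicity-one decomposition with the correct zero-padding in the two cases $q\ge s$ and $q\le s$; and the block computation $\det_j(LzU)=\det_j(L)\det_j(z)\det_j(U)$ for $L$ lower and $U$ upper triangular, together with $\det_j({}^t\alpha)=\det_j(\alpha)$ and the degree check $\sum_j je_j=\sum_j r_j=d$, correctly exhibits $\mathcal{P}$ as a highest weight vector in exactly the sense of Equations \eqref{ej1}--\eqref{ej2}. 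The trade-off is only one of provenance: the paper (following Shimura) leans on the classical invariant-theoretic literature as a black box, whereas your argument is self-contained modulo Cauchy's identity and has the advantage of making the multiplicity-one claim and the explicit eigenvector transparent rather than cited.
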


Now that we have described the representations occurring in $\tau_{r, s}^d$ over $\IC$, we describe the representations occurring in the restriction of $\tau_{n,n}^d$ to $\left(\left(\gl_r(S)\times\gl_s(S)\right)\times \left(\gl_s(S)\times\gl_r(S)\right)\right)$.

\begin{prop}\label{prop127}
Let $r$ and $s$ be positive integers such that $r+s = n$, and let $d$ be a positive integer.  Then the representation $\tau^d_{n,n}|_{\left(\left(\gl_r(S)\times\gl_s(S)\right)\times \left(\gl_s(S)\times\gl_r(S)\right)\right)}$ of $\left(\gl_r(S)\times\gl_s(S)\right)\times \left(\gl_s(S)\times\gl_r(S)\right)$ on $\mathfrak{S}_d\left(M_{n\times n}(S)\right)$ is isomorphic to
\begin{align*}
\bigoplus_{\stackrel{d_1, d_2, d_3, d_4\in\ZZ_{\geq0}}{ d_1+d_2+d_3+d_4 = d}}\tau_{d_1, d_2, d_3, d_4},
\end{align*}
where 
\begin{align}\label{d1234}
\tau_{d_1, d_2, d_3, d_4} = \tau_{r, r}^{d_1}\otimes\tau_{s, s}^{d_2}\otimes\tau_{r, s}^{d_3}\otimes \tau_{s, r}^{d_4}.
\end{align}
\end{prop}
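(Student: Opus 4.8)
The plan is to introduce explicit block coordinates on $M_{n\times n}(S)$ and to observe that in these coordinates the restricted group preserves the block decomposition of a matrix, so that the representation splits immediately into the asserted pieces. Recall from the commutative diagram \eqref{commdiag} that an element of $\left(\gl_r(S)\times\gl_s(S)\right)\times\left(\gl_s(S)\times\gl_r(S)\right)$ is sent into $\gl_n(S)\times\gl_n(S)$ by $\left(\left(g,h\right),\left(g',h'\right)\right)\mapsto\left(\diag(g,g'),\diag(h,h')\right)$, where $g\in\gl_r(S)$, $h\in\gl_s(S)$, $g'\in\gl_s(S)$, $h'\in\gl_r(S)$. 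Since in the action $\tau^d_{n,n}(\alpha,\beta)P(z)=P({}^t\alpha z\beta)$ the first argument acts on the rows of $z$ and the second on the columns, I would split the rows of $z\in M_{n\times n}(S)$ according to the partition $n=r+s$ coming from $\diag(g,g')$ and the columns according to the partition $n=s+r$ coming from $\diag(h,h')$, writing
\begin{align*}
z=\begin{pmatrix}z_{11}&z_{12}\\z_{21}&z_{22}\end{pmatrix}
\end{align*}
with $z_{11}\in M_{r\times s}(S)$, $z_{12}\in M_{r\times r}(S)$, $z_{21}\in M_{s\times s}(S)$, and $z_{22}\in M_{s\times r}(S)$.

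The only computation required is then
\begin{align*}
{}^t\diag(g,g')\,z\,\diag(h,h')=\begin{pmatrix}{}^tg\,z_{11}\,h&{}^tg\,z_{12}\,h'\\{}^tg'\,z_{21}\,h&{}^tg'\,z_{22}\,h'\end{pmatrix},
\end{align*}
which shows that the restriction of $\tau^d_{n,n}$ sends polynomials in the entries of a given block to polynomials in the entries of the same block, acting on the entries of $z_{11}$ through the copy of $\tau_{r,s}$ attached to $(g,h)$, on those of $z_{12}$ through $\tau_{r,r}$ attached to $(g,h')$, on those of $z_{21}$ through $\tau_{s,s}$ attached to $(g',h)$, and on those of $z_{22}$ through $\tau_{s,r}$ attached to $(g',h')$. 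Next I would decompose by multidegree: the polynomial $S$-algebra on $M_{n\times n}(S)$ is the tensor product over $S$ of the polynomial algebras in the entries of $z_{12}$, $z_{21}$, $z_{11}$, and $z_{22}$, so its degree-$d$ part is
\begin{align*}
\mathfrak{S}_d\left(M_{n\times n}(S)\right)=\bigoplus_{\stackrel{d_1,d_2,d_3,d_4\in\ZZ_{\geq0}}{d_1+d_2+d_3+d_4=d}}\mathfrak{S}_{d_1}\left(M_{r\times r}(S)\right)\otimes\mathfrak{S}_{d_2}\left(M_{s\times s}(S)\right)\otimes\mathfrak{S}_{d_3}\left(M_{r\times s}(S)\right)\otimes\mathfrak{S}_{d_4}\left(M_{s\times r}(S)\right),
\end{align*}
where the four factors consist of polynomials of the indicated degrees in $z_{12}$, $z_{21}$, $z_{11}$, $z_{22}$ respectively. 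This step is purely module-theoretic and requires nothing about $S$ beyond what is needed to define the $\mathfrak{S}_d$, so in particular it is valid when $S$ is a $p$-adic ring rather than $\IC$. Combining it with the block computation, each summand is stable under the restricted group and carries exactly $\tau_{r,r}^{d_1}\otimes\tau_{s,s}^{d_2}\otimes\tau_{r,s}^{d_3}\otimes\tau_{s,r}^{d_4}=\tau_{d_1,d_2,d_3,d_4}$, the four tensor factors acting through the projections of $\left(\gl_r(S)\times\gl_s(S)\right)\times\left(\gl_s(S)\times\gl_r(S)\right)$ onto $(g,h')$, $(g',h)$, $(g,h)$, and $(g',h')$ respectively (with $\tau^0$ read as the trivial representation on the constants). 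Summing over all admissible $(d_1,d_2,d_3,d_4)$ yields the claimed isomorphism.

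I do not expect any genuine obstacle here; the content is bookkeeping with block matrices. The step most prone to error, rather than to real difficulty, is the asymmetry between the two partitions of $n$ --- the first $\gl_n(S)$ is split as $(r,s)$ while the second is split as $(s,r)$ --- so that the ``diagonal'' blocks $z_{12}$ and $z_{21}$ are square, of sizes $r$ and $s$, while the ``off-diagonal'' blocks $z_{11}$ and $z_{22}$ have shapes $r\times s$ and $s\times r$; reproducing exactly the matching of degrees, shapes, and group factors recorded in \eqref{d1234} is the point that must be gotten right.
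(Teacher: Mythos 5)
Your proof is correct and follows essentially the same route as the paper: restrict to block-diagonal pairs, observe that the action preserves the four blocks of $z$ (with $\tau_{r,r}$, $\tau_{s,s}$, $\tau_{r,s}$, $\tau_{s,r}$ acting on the respective blocks), and decompose $\mathfrak{S}_d$ by multidegree. You merely spell out the multidegree bookkeeping that the paper leaves as ``follows immediately.''
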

With regard to Equation \eqref{d1234}, our convention is that if $\gamma$ is a representation, then $\gamma^0$ is the trivial representation.
\begin{proof}
Given positive integers $k$ and $l$, denote by $0_{k, l}$ the $k\times l$ $0$-matrix.  Consider the $n\times n$  complex matrices
\begin{align*}
\alpha &= \begin{pmatrix}
A & 0_{r, s}\\
0_{s, r}& B
\end{pmatrix}\\
\beta & = \begin{pmatrix}
C & 0_{s, r}\\
0_{r, s}& D
\end{pmatrix}\\
z & =  \begin{pmatrix}
X & Y\\
Z& W
\end{pmatrix},
\end{align*}
with $A, D\in \gl_r(S)$, $B, C\in \gl_s(S)$, $X\in M_{r\times s}(S)$, $Y\in M_{r\times r}(S)$, $Z\in M_{s\times s}(S)$, and $W\in M_{s\times r}(S)$.  

Let $g$ be an element of $\mathfrak{S}_d(M_{n\times n}(S))$.  Observe that
\begin{align*}
\tau^d(\alpha, \beta)g(z)  = g\left(\begin{pmatrix}AX{ }^tC & A Y { }^tD\\
BZ{ }^tC & BW{ }^tD\end{pmatrix}\right).
\end{align*}
The proposition now follows immediately.
\end{proof}

Together, Theorem \ref{thm127} and Proposition \ref{prop127} describe the irreducible subrepresentations of $\tau^d_{n,n}|_{\left(\left(\gl_r(\IC)\times\gl_s(\IC)\right)\times \left(\gl_s(\IC)\times\gl_r(\IC)\right)\right)}$.

\begin{defi}\label{defiamk}
Let $m = \left(m\left(\sigma\right)\right)_{\sigma\in\Sigma}, l = \left(l\left(\sigma\right)\right)_{\sigma\in\Sigma}\in\ZZ_{m\geq 0}$.  Let $\rho$ be as in Equation \eqref{autfactorrho}.  Let $\gamma$ be the subrepresentation of $\otimes_{\sigma\in\Sigma}\tau_{m(\sigma), l(\sigma), 0, 0}$ defined by $\gamma = \otimes_{\sigma\in\Sigma}\det^{m(\sigma)}\otimes \det^{l(\sigma)}\otimes \det^0\otimes \det^0$.
We define an operator
\begin{align*}
A_{m, l}^{k, \nu}\left(\ci\right): \mathcal{M}_{k, \nu}\left(\ci, U^{\varphi\oplus-\varphi}\right)\rightarrow \mathcal{M}_{(k+m+l, \nu-l), (k+m+l, \nu-m)}\left(\ci, U^\varphi\times U^{-\varphi}\right)
\end{align*}
by
\begin{align*}
A_{m, l}^{k, \nu}(\ci):=A_{\rho}^\gamma\left(\ci\right).
\end{align*}
\end{defi}

We use the notation
\begin{align*}
A_{m, l}^{k, \nu}(\ci, \sigma) = A_{m(\sigma), l(\sigma)}^{k(\sigma), \nu(\sigma)}(\ci)
\end{align*}
to denote the action of $A_{m, l}^{k, \nu}(\ci)$ on each automorphic form at $\sigma$, for all $\sigma\in\Sigma$.

\begin{prop}
The operators $A_{m, l}^{k, \nu}\left(\ci, \sigma\right)$ exist for all $\sigma\in\Sigma$ and all nonnegative integers $m$ and $l$.
\end{prop}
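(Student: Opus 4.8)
The plan is to unwind Definitions~\ref{pllbkopdefi} and~\ref{defiamk} and to reduce the claim to a representation-theoretic statement about $\tau^{d}$ that follows from Proposition~\ref{prop127} and Theorem~\ref{thm127}. By Definition~\ref{defiamk}, the operator $A_{m,l}^{k,\nu}(\ci)$ is, by definition, the operator $A_{\rho}^{\gamma}(\ci)$ of Definition~\ref{pllbkopdefi}, where $\rho$ is the one-dimensional weight of~\eqref{autfactorrho} attached to $(k,\nu)$ and $\gamma=\otimes_{\sigma\in\Sigma}\det^{m(\sigma)}\otimes\det^{l(\sigma)}\otimes\det^{0}\otimes\det^{0}$. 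Since all the relevant data — the symmetric spaces $\mathfrak{Z}^{\varphi}$, $\mathfrak{Z}^{-\varphi}$, $\hn$, the inclusion $\iota_{\varphi,\infty}$ of~\eqref{symmspaceinclusion}, and the weights $\rho$, $\tau^{d}$, $\gamma$ — factor as products, respectively tensor products, over $\Sigma$, the operator factors as $A_{m,l}^{k,\nu}(\ci)=\prod_{\sigma}A_{m,l}^{k,\nu}(\ci,\sigma)$, and it suffices to produce each $\sigma$-component. By Definition~\ref{pllbkopdefi}, $A_{\rho}^{\gamma}(\ci)$ is defined as soon as there is a $d\in\ZZ_{\geq 0}^{\Sigma}$ for which $\rho\otimes\gamma$ is a subrepresentation of the restriction of $\rho\otimes\tau^{d}$ to the complexification of $\mathcal{K}\cap(U^{\varphi}\times U^{-\varphi})$, together with a choice of projection $\pi_{\gamma}$ onto it; this is the whole content.

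First I would take $d(\sigma)=a_{\sigma}m(\sigma)+b_{\sigma}l(\sigma)$, where $(a_{\sigma},b_{\sigma})$ is the signature of $U^{\varphi}_{\sigma}$, so $a_{\sigma}+b_{\sigma}=n$; this choice is forced by matching polynomial degrees, since $\det^{m(\sigma)}$ is homogeneous of degree $a_{\sigma}m(\sigma)$ on $a_{\sigma}\times a_{\sigma}$ matrices and $\det^{l(\sigma)}$ of degree $b_{\sigma}l(\sigma)$ on $b_{\sigma}\times b_{\sigma}$ matrices. By the commutative diagram~\eqref{commdiag}, the complexification of the $\sigma$-component of $\mathcal{K}\cap(U^{\varphi}\times U^{-\varphi})$ is $\left(\gl_{a_{\sigma}}(\IC)\times\gl_{b_{\sigma}}(\IC)\right)\times\left(\gl_{b_{\sigma}}(\IC)\times\gl_{a_{\sigma}}(\IC)\right)$, and Proposition~\ref{prop127} (applied with $r=a_{\sigma}$, $s=b_{\sigma}$, $S=\IC$, $d=d(\sigma)$, and $(d_{1},d_{2},d_{3},d_{4})=(a_{\sigma}m(\sigma),b_{\sigma}l(\sigma),0,0)$) exhibits
\[
\tau_{a_{\sigma},a_{\sigma}}^{a_{\sigma}m(\sigma)}\otimes\tau_{b_{\sigma},b_{\sigma}}^{b_{\sigma}l(\sigma)}\otimes\tau_{a_{\sigma},b_{\sigma}}^{0}\otimes\tau_{b_{\sigma},a_{\sigma}}^{0}
\]
as a direct summand of the restriction of $\tau_{n,n}^{d(\sigma)}$ to this subgroup. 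Inside this summand, Theorem~\ref{thm127} — applied to the first tensor factor with $q=s=a_{\sigma}$ and highest weight $(m(\sigma),\dots,m(\sigma))$, to the second with $q=s=b_{\sigma}$ and highest weight $(l(\sigma),\dots,l(\sigma))$, and trivially to the last two — identifies the one-dimensional subrepresentations $\det^{m(\sigma)}$, $\det^{l(\sigma)}$, $\det^{0}$, $\det^{0}$, spanned respectively by the highest-weight eigenvectors $\det(z)^{m(\sigma)}$, $\det(z)^{l(\sigma)}$, $1$, $1$, each occurring with multiplicity one. Their tensor product is, at $\sigma$, precisely the factor of $\gamma$ from Definition~\ref{defiamk}; forming $\otimes_{\sigma}$ shows that $\gamma$ is an irreducible subrepresentation of the restriction of $\tau^{d}=\otimes_{\sigma}\tau_{n,n}^{d(\sigma)}$, hence — since $\rho$ is one-dimensional — $\rho\otimes\gamma$ is a subrepresentation of the restriction of $\rho\otimes\tau^{d}$. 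The multiplicity-one statement yields a canonical projection $\pi_{\gamma}$, so Definition~\ref{pllbkopdefi} applies and $A_{m,l}^{k,\nu}(\ci,\sigma)$ exists.

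It then remains to check that the output weight agrees with the one recorded in Definition~\ref{defiamk}, namely $\bigl((k+m+l,\nu-l),(k+m+l,\nu-m)\bigr)$: this is a bookkeeping identity expressing $\rho\otimes\gamma$ as an explicit product of determinant characters of $\left(\gl_{a_{\sigma}}(\IC)\times\gl_{b_{\sigma}}(\IC)\right)\times\left(\gl_{b_{\sigma}}(\IC)\times\gl_{a_{\sigma}}(\IC)\right)$, using that under the embedding $\bigl((g,h),(g',h')\bigr)\mapsto\bigl(\diag(g,g'),\diag(h,h')\bigr)$ of~\eqref{commdiag} the automorphy factor $\det(\mu_{\alpha})^{k+\nu}\det(\lambda_{\alpha})^{-\nu}$ of~\eqref{autfactorrho} restricts blockwise to a product of determinant powers, to which the $\det^{m(\sigma)}$- and $\det^{l(\sigma)}$-twists coming from $\tau_{n,n}^{d(\sigma)}$ are then added. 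I expect this last step to be the only delicate point: one has to keep straight which $U(a_{\sigma})$- and $U(b_{\sigma})$-factors of $\mathcal{K}^{\varphi}$ and $\mathcal{K}^{-\varphi}$ sit in the $\mu$- versus the $\lambda$-slot of $\mathcal{K}^{\varphi\oplus-\varphi}$, and correspondingly which block of $z\in M_{n\times n}(\IC)$ carries which tensor factor of $\tau_{n,n}^{d(\sigma)}$ (as computed in the proof of Proposition~\ref{prop127}), after which the four exponents line up on the nose. No ingredient beyond Proposition~\ref{prop127} and Theorem~\ref{thm127} enters: the proposition is, in essence, the verification that the $\gamma$ singled out in Definition~\ref{defiamk} really is a constituent of $\rho\otimes\tau^{d}$, so that the construction of Definition~\ref{pllbkopdefi} can be carried out. (As is implicit in~\eqref{weightraisingnn}, one assumes $(k,\nu)$ is such that $\rho$ and the weight-raising operator $D_{\rho}^{d}(\ci)$ are defined; if not, one first multiplies $f$ by a high enough power of $\det\uo$ and removes it again at the end, which commutes with everything above.)
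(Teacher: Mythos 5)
Your argument is correct and matches the paper's approach exactly: the paper's entire proof is the one-line citation of Proposition \ref{prop127}, and your write-up simply fills in the details (via Theorem \ref{thm127}) of why the block-diagonal summand with $(d_1,d_2,d_3,d_4)=(a_\sigma m(\sigma),b_\sigma l(\sigma),0,0)$ contains the character $\det^{m(\sigma)}\otimes\det^{l(\sigma)}\otimes\det^{0}\otimes\det^{0}$. One small caveat: when $a_\sigma=b_\sigma$ this character can also occur in other summands $\tau_{d_1',d_2',d_3',d_4'}$ (so it need not have multiplicity one in the full restriction of $\tau^{d}_{n,n}$), but the projection is still determined because Definition \ref{defiamk} places $\gamma$ inside the specific summand $\tau_{m(\sigma),l(\sigma),0,0}$, and Definition \ref{pllbkopdefi} in any case only asks that \emph{some} projection be fixed.
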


\begin{proof}
This follows immediately from Proposition \ref{prop127}.
\end{proof}

Note that the definition of the operators $A_{m, l}^{k, \nu}\left(\ci\right)$ depends upon the choice of a Hermitian form $\varphi$.  When it is not clear from context what our choice of Hermitian form is, we shall write
\begin{align*}
A_{m, l}^{k, \nu, \varphi}\left(\ci\right)
\end{align*}
for the operator $A_{m, l}^{k, \nu}\left(\ci\right)$ dependent on the Hermitian form $\varphi$.

Define
\begin{align*}
\iota_0: \mathfrak{Z}^{-\varphi}\times\mathfrak{Z}^{\varphi}\isomto \mathfrak{Z}^{\varphi}\times\mathfrak{Z}^{-\varphi}
\end{align*}
by
\begin{align*}
\iota_0(z, w) = (w, z)
\end{align*}
for all $(z, w)\in\mathfrak{Z}^{-\varphi}\times\mathfrak{Z}^{\varphi}$.  Note the symmetry between the definition of $A_{m, 0}^{k, \nu}\left(\ci, \sigma\right)$ and the definition of $A_{0, l}^{k, \nu}\left(\ci, \sigma\right)$.  As a direct result of this symmetry, we obtain the following lemma
\begin{lem}\label{symherm}
For each Hermitian form $\varphi$, $\ci$-automorphic form $f$ on $U^{\varphi\oplus-\varphi}$, and $m, l\in\ZZ_{\geq 0}^\Sigma$,
\begin{align*}
A_{m, l}^{k, \nu, \varphi}\left(\ci, \sigma\right)f  = \left(A_{l, m}^{k, \nu, -\varphi}\left(\ci, \sigma\right)f\right)\circ\iota_0.
\end{align*}
\end{lem}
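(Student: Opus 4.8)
The plan is to unwind Definitions \ref{pllbkopdefi} and \ref{defiamk} for both $A_{m,l}^{k,\nu,\varphi}(\ci,\sigma)$ and $A_{l,m}^{k,\nu,-\varphi}(\ci,\sigma)$ and to check that the only difference between them is the coordinate swap $\iota_0$. First I would record that $f$ is a legitimate input for both operators, since $U^{\varphi\oplus-\varphi}$ is identified with $U^{(-\varphi)\oplus-(-\varphi)}$ by the canonical isomorphism exchanging the two summands of $W=V\oplus-V$, and that the weight-raising operator $D_\rho^d(\ci)$ of \eqref{weightraisingnn} depends only on $\rho$ and the multidegree $d$: it is built from the Gauss--Manin connection and the Kodaira--Spencer isomorphism \eqref{ks-iso} on the signature-$(n,n)$ moduli space attached to $\eta$, with no reference to the auxiliary choice of $\varphi$ versus $-\varphi$. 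Working one place $\sigma\in\Sigma$ at a time (everything factors as a tensor product over $\Sigma$), I would observe that the total degree $d(\sigma)$ of the representation $\gamma$ entering $A_{m,l}^{k,\nu,\varphi}(\ci,\sigma)$ is symmetric under the interchange $(a_\sigma,b_\sigma,m(\sigma),l(\sigma))\leftrightarrow(b_\sigma,a_\sigma,l(\sigma),m(\sigma))$ --- which is exactly the data that gets swapped when one replaces $\varphi$ by $-\varphi$, since the signature of $U^{-\varphi}_\sigma$ is $(b_\sigma,a_\sigma)$. Hence $D_\rho^d(\ci)f$ is literally the same $\ci$-automorphic form of weight $\rho\otimes\tau^d$ on $U^\eta$ in the two constructions.

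It then remains to match the ``restrict, then project'' step applied to this common form. By Definition \ref{pllbkopdefi} we have $A_{m,l}^{k,\nu,\varphi}(\ci,\sigma)f=\pi_{\gamma_\varphi}\circ\big(D_\rho^d(\ci)f\big)\circ\iota_{\varphi,\infty}$ and $A_{l,m}^{k,\nu,-\varphi}(\ci,\sigma)f=\pi_{\gamma_{-\varphi}}\circ\big(D_\rho^d(\ci)f\big)\circ\iota_{-\varphi,\infty}$, where, by Definition \ref{defiamk} and Proposition \ref{prop127}, $\gamma_\varphi$ and $\gamma_{-\varphi}$ are the one-dimensional subrepresentations $\det^{m(\sigma)}\otimes\det^{l(\sigma)}\otimes\det^0\otimes\det^0$ and $\det^{l(\sigma)}\otimes\det^{m(\sigma)}\otimes\det^0\otimes\det^0$ of the restriction of $\rho\otimes\tau^d$ to $\cpct^{c,\varphi}\times\cpct^{c,-\varphi}$, with the four tensor factors indexed as in Proposition \ref{prop127}. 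Both $\iota_{\varphi,\infty}$ and $\iota_{-\varphi,\infty}$ are induced by the canonical inclusion of $V\oplus-V$ into $W$ together with the compatible choices of maximal compact subgroups fixed in \eqref{commdiag} and \eqref{cpctinclusion}; interchanging the two summands of $W$ --- which is precisely the map $\iota_0$ on the symmetric-space side --- identifies $\iota_{-\varphi,\infty}$ with $\iota_{\varphi,\infty}$ up to $\iota_0$, as one reads off from Shimura's explicit formula \cite[Equation (6.10.2)]{sh}. The same interchange carries the block of an $n\times n$ matrix transforming through the $\gl_{a_\sigma}$-factors of $\cpct^{c,\varphi}$ and $\cpct^{c,-\varphi}$ to the corresponding block for $-\varphi$ (and similarly for the $\gl_{b_\sigma}$-factors), hence intertwines $\tau^d$ and carries $\gamma_\varphi$ to $\gamma_{-\varphi}$, so the two projections agree under it. Assembling these observations with the first paragraph yields $A_{m,l}^{k,\nu,\varphi}(\ci,\sigma)f=\big(A_{l,m}^{k,\nu,-\varphi}(\ci,\sigma)f\big)\circ\iota_0$.

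The only real work is the bookkeeping in the second paragraph: pinning down, under the conventions of \eqref{commdiag}, \eqref{cpctinclusion} and Proposition \ref{prop127}, precisely which block of an $n\times n$ matrix transforms through which compact factor and in which of the two symmetric-space variables, and checking that a single coordinate swap simultaneously intertwines the embeddings $\iota_{\varphi,\infty}$, the representation $\tau^d$, and the chosen projections $\pi_\gamma$. I do not expect a genuine obstacle here: there is no analytic content, $\iota_0$ merely permutes coordinates, and the symmetry between the definitions of $A_{m,0}^{k,\nu}(\ci,\sigma)$ and $A_{0,l}^{k,\nu}(\ci,\sigma)$ noted just before the statement makes the verification essentially mechanical once this dictionary is written down --- which is why the lemma is ``a direct result'' of that symmetry.
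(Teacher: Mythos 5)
Your proposal is correct and follows the same route as the paper: the paper's proof simply states that the lemma ``follows immediately from the definitions of $A_{m, 0}^{k, \nu}\left(\ci, \sigma\right)$, $A_{0, l}^{k, \nu}\left(\ci, \sigma\right)$, and $\iota_0$,'' and your argument is exactly that unwinding, carried out explicitly (same weight-raising operator $D_\rho^d(\ci)$ on $U^\eta$, then matching the restrictions and projections under the coordinate swap). The extra bookkeeping you supply is consistent with the conventions in \eqref{commdiag}, \eqref{cpctinclusion}, and Proposition \ref{prop127}, so nothing further is needed.
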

\begin{proof}
The lemma follows immediately from the definitions of $A_{m, 0}^{k, \nu}\left(\ci, \sigma\right)$, $A_{0, l}^{k, \nu}\left(\ci, \sigma\right)$, and $\iota_0$.
\end{proof}

Let $A_m^k(\sigma)$ denote the $\sigma$-component of Shimura's operator $A_m^k$ from Expression \eqref{shamk}.

\begin{prop}\label{propshme}
Let $m = \left(m(\sigma)\right)_{\sigma\in\Sigma}\in\ZZ^{\Sigma}_{>0}$.  If $a_\sigma\geq b_\sigma$, the operator $A_{m, 0}^{k, \nu}(\ci, \sigma)$ is the operator $A_m^k(\sigma)$.  If $a_\sigma< b_\sigma$, the operator $A_{0, m}^{k, \nu}(\ci, \sigma)$ is the operator $A_m^k(\sigma)$.
\end{prop}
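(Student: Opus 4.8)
The plan is to write both $A_{m,0}^{k,\nu}(\ci,\sigma)$ (resp.\ $A_{0,m}^{k,\nu}(\ci,\sigma)$) and $A_m^k(\sigma)$ as instances of the same three-step recipe --- apply a scalar-weight $\ci$-differential operator on $\hn$, restrict along $\iota_{\varphi,\infty}$, and project onto a single irreducible constituent of the resulting weight --- and then to reduce the whole statement to matching the two projections, the case distinction entering only there.

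First I would record the structure of Shimura's operator: by \cite[Sections 23.6--23.14]{sh}, $A_m^k(\sigma)$ is the composite of (i) Shimura's scalar-weight $\ci$-differential operator of order $m(\sigma)$, carrying a weight-$(k,\nu)$ form on $\hn$ to a $\ci$-form on $\hn$ of weight $(k,\nu)\otimes\tau^d$ for the appropriate $d$; (ii) restriction along $\iota_{\varphi,\infty}$; and (iii) projection onto a distinguished irreducible constituent of $\tau^d$ restricted to the complexification of $\cpct^\varphi\times\cpct^{-\varphi}$. On our side, by Definitions \ref{pllbkopdefi} and \ref{defiamk}, $A_{m,0}^{k,\nu}(\ci,\sigma)$ equals $\pi_\gamma\circ\big(D^d_{(k,\nu)}(\ci)f\big)\circ\iota_{\varphi,\infty}$ with $\gamma$ the $\det^{m(\sigma)}$-constituent sitting in the $\tau_{a_\sigma,a_\sigma}$-block of Proposition \ref{prop127}, and $A_{0,m}^{k,\nu}(\ci,\sigma)$ is the same with $\gamma$ the $\det^{m(\sigma)}$-constituent of the $\tau_{b_\sigma,b_\sigma}$-block. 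By the comparison of $\ci$-differential operators recalled before Definition \ref{pllbkopdefi} (from \cite{EDiffOps}), $D^d_{(k,\nu)}(\ci)$ is exactly Shimura's scalar-weight $\ci$-operator, and $\iota_{\varphi,\infty}$ is the same inclusion used on both sides; so steps (i) and (ii) already agree, and everything reduces to identifying Shimura's projection in (iii) with $\pi_\gamma$.

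To carry out that identification I would combine Theorem \ref{thm127} with the block computation in the proof of Proposition \ref{prop127}. Writing the $\hn$-variable in $2\times 2$ block form as there, with $M_{a_\sigma\times a_\sigma}$-block $Y$ and $M_{b_\sigma\times b_\sigma}$-block $Z$, Theorem \ref{thm127} (with $q=s=a_\sigma$, resp.\ $q=s=b_\sigma$, each occurring with multiplicity one) identifies the $\det^{m(\sigma)}$-constituent of the $\tau_{a_\sigma,a_\sigma}$-block with the line spanned by $\det(Y)^{m(\sigma)}$ and that of the $\tau_{b_\sigma,b_\sigma}$-block with the line spanned by $\det(Z)^{m(\sigma)}$. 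A short weight computation (as in the paragraph after Definition \ref{defiamk}) then shows that projecting a restricted weight-$(k,\nu)$ form onto $\det(Y)^{m(\sigma)}$ raises the $\gl_{a_\sigma}$-weight on the $U^\varphi$-factor and on the $U^{-\varphi}$-factor each by $\det^{m(\sigma)}$, giving a form of weight $(k+m,\nu),(k+m,\nu-m)$ --- the weight of $A_m^k f$ recorded after Expression \eqref{shamk} --- whereas projecting onto $\det(Z)^{m(\sigma)}$ gives the factor-swapped weight. Matching Shimura's explicit formula in \cite[Section 23]{sh} with $\det(Y)^{m(\sigma)}$ (note $m(\sigma)>0$, so this constituent is nonzero) then yields $A_{m,0}^{k,\nu}(\ci,\sigma)=A_m^k(\sigma)$ when $a_\sigma\geq b_\sigma$. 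For $a_\sigma<b_\sigma$ I would apply this case to the Hermitian form $-\varphi$, whose signature at $\sigma$ is $(b_\sigma,a_\sigma)$ with $b_\sigma>a_\sigma$, and then invoke Lemma \ref{symherm} together with the matching symmetry of Shimura's construction under $\varphi\leftrightarrow-\varphi$ and the coordinate swap $\iota_0$ to obtain $A_{0,m}^{k,\nu}(\ci,\sigma)=A_m^k(\sigma)$.

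I expect the main obstacle to be step (iii): extracting from \cite[Section 23]{sh} exactly which constituent of the restricted $\tau^d$ Shimura's projection selects, and reconciling his explicit matrix coordinates on $\mathfrak{Z}^\varphi$, $\mathfrak{Z}^{-\varphi}$, and $\hn$ --- including his choice between the holomorphic and the antiholomorphic version of $\iota_{\varphi,\infty}$, cf.\ the remark after \eqref{symmspaceinclusion} --- with the coordinate-free geometric picture, so that the weight comparison genuinely singles out the $\det(Y)^{m(\sigma)}$ (resp.\ $\det(Z)^{m(\sigma)}$) line rather than another constituent of the same total degree, and so that the bookkeeping of ``first versus second factor'' of $U^\varphi\times U^{-\varphi}$ is consistent with Shimura's. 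Once that matching is established, the equality of operators is formal.
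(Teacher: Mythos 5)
Your proposal is correct and follows essentially the same route as the paper, whose entire proof is the two-sentence observation that Shimura's definition is coordinate-dependent while the paper's is coordinate-free, and that equivalence follows by plugging in Shimura's coordinates from \cite{sh}. Your write-up simply makes explicit the details the paper leaves implicit --- the three-step factorization of both operators, the identification of the relevant $\det^{m(\sigma)}$-constituents via Theorem \ref{thm127} and Proposition \ref{prop127}, and the reduction of the $a_\sigma<b_\sigma$ case to the other via Lemma \ref{symherm} --- and correctly flags the coordinate-matching in step (iii) as the only real work.
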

\begin{proof}
Shimura defines the operators $A_m^k$ in terms of his specific choice of coordinates, whereas our definition is coordinate-free.  That the definitions are equivalent follows from plugging in the choice of coordinates with which Shimura works in \cite{sh}.
\end{proof}
\begin{rmk}
Shimura also handles the more general case of $A_m^k$ in which $m(\sigma)$ is allowed to be negative for some $\sigma\in\Sigma$.  However, that requires antiholomorphic weight-lowering operators, which we do not use here or in any of the previous papers upon which the results of the present paper build.
\end{rmk}

In \cite[Sections 23.8-23.14]{sh}, Shimura describes precisely how the operators $A_m^k$ act on certain $\ci$-Eisenstein series, in terms of his choice of coordinates.  (In the present paper, we do not need the details of that description.)  Shimura's writing is apparently restricted to the operators $A_m^k$, i.e. the operators $A_\rho^\gamma$ with $\gamma$ one-dimensional.

\subsection{More on representations}\label{hwreps}
Like in \cite[Section 4]{apptoSHLvv}, in order to transition from the $\ci$-setting to the $p$-adic setting, we shall briefly discuss rational representations and vector bundles.  Note that we can rephrase the discussion from Sections \ref{highwtirrpoly-section} and \ref{subrepres} in the notation of this discussion.
Our discussion in this section is similar to the discussion in \cite[Section 4.3]{apptoSHLvv}, and as much as possible, we adhere to the notation of \cite[Section 4.3]{apptoSHLvv}.
Our discussion summarizes relevant details from \cite[Section 8.1.2]{hida}, which, in turn, summarizes results from \cite{hidaGME} and \cite{jantzenRAG}.  

For any ring $A$, let $B(A)$ denote the upper triangular Borel subgroup of $\gln\left(\OK\otimes_\ZZ A\right)$, and let $N(A)$ denote the unipotent subgroup of $B(A)$.  Let $T(A)\cong B(A)/N(A)$ denote the torus.  For each character $\kappa$ of $T(A)$, we denote by $R_{\OK}[\kappa](A)$ the $A\left[\gln\left(\OK\otimes_\ZZ A\right)\right]$-module defined by
\begin{multline*}
R_{\OK}[\kappa](A):=\Ind_{B(A)}^{\gln\left(\OK\otimes_\ZZ A\right)}\left(\kappa\right)\\
=\left\{\mbox{homogeneous polynomials } f: \gln\left(\OK\otimes_\ZZ A\right)\rightarrow A\mid\right.\\ \left.
 f(ht) = \kappa(t) f(h) \mbox{ for all } t\in T(A), h\in\gln(A)/N(A)\right\}.
\end{multline*} 
The group $\gln\left(\OK\otimes_\ZZ A\right)/N(A)$ acts on $R_{\OK}[\kappa](A)$ via
\begin{align}\label{hwkappa}
\left(g\cdot f\right)(x) = f\left(g^{-1} x\right).
\end{align}
As in \cite{apptoSHLvv}, we are especially interested in the cases in which $A = \IC$ or $A$ is a $p$-adic ring.  Note that
\begin{align*}
\gln\left(\OK\otimes_\ZZ \IC\right)&\cong \prod_{\sigma\in\Sigma}\gln(\IC)\times\gln(\IC)\\
\gln\left(\OK\otimes_\ZZ \ZZ_p\right)&\cong  \prod_{v\in\Sigma}\gln\left({\Oe}_v\right)\times\gln\left({\Oe}_v\right),
\end{align*}
and for any $p$-adic ring $A$,
\begin{align*}
\gln\left(\OK\otimes_\ZZ A\right)\cong \gln\left(\Oe\otimes A\right)\times \gln\left(\Oe\otimes A\right).
\end{align*}

When $A$ is a topological ring, we define $A\left[\gln\left(\OK\otimes_\ZZ A\right)\right]$-modules
\begin{align*}
\mathcal{C}_{\OK}[\kappa](A):=&\left\{\mbox{continuous functions} f: \gln\left(\OK\otimes_\ZZ A\right)/N(A)\rightarrow A\mid\right.\\ &\left. f(ht) = \kappa(t)f(h) \mbox{ for all } t\in T(A), h\in\gln(A)/N(A)\right\}\\
\mathcal{LC}_{\OK}[\kappa](A):=&\left\{\mbox{locally constant functions} f: \gln\left(\OK\otimes_\ZZ A\right)/N(A)\rightarrow A\mid\right.\\ &\left.
 f(ht) = \kappa(t)f(h) \mbox{ for all } t\in T(A), h\in\gln(A)/N(A)\right\}.
\end{align*}
The group $\gln\left(\OK\otimes_\ZZ A\right)/N(A)$ acts on the elements $f$ of each of these modules via
\begin{align*}
\left(g\cdot f\right)(x) = f\left(g^{-1} x\right).
\end{align*}

We write $\rho_\kappa$ to mean the representation of highest weight $\kappa$, i.e. the representation defined by Equation \eqref{hwkappa}.

\subsection{$p$-adic differential operators}\label{padicdiffops-section}

As discussed in detail in \cite{EDiffOps}, the $\ci$-differential operators
\begin{align}
D_{\rho}^d\left(\ci\right): \mathcal{M}_\rho\left(\ci, U^{\varphi\oplus-\varphi}\right)\rightarrow \mathcal{M}_{\rho\otimes\tau^d}\left(\ci, U^{\varphi\oplus-\varphi}\right),
\end{align}
introduced in Expression \eqref{weightraisingnn} (and used in Sections \ref{cidiffops-section}  and \ref{subrepres} to construct the operators $A_\rho^\gamma\left(\ci\right)$ and $A_{m,l}^{k, \nu}\left(\ci\right)$) have a $p$-adic analogue
\begin{align*}
D_{\rho}^d\left(\padic\right): \mathcal{M}_\rho\left(\padic, U^{\varphi\oplus-\varphi}\right)\rightarrow \mathcal{M}_{\rho\otimes\tau^d}\left(\padic, U^{\varphi\oplus-\varphi}\right),
\end{align*}
where $\mathcal{M}_\rho\left(\padic, U^{\varphi\oplus-\varphi}\right)$.  (Note that the notation in this paper differs from the notation in \cite{EDiffOps}.)  The construction of $D_{\rho}^d\left(\padic\right)$ is similar to the construction of $D_\rho^d\left(\padic\right)$ in Section \ref{cidiffops-section}, except that instead of working over $M_\phi(\ci)$, we work over the Igusa tower, and the Hodge decomposition is replaced by the {\it unit root splitting} $\hdr = \uo\oplus\unitroot$, where $\unitroot$ is a sheaf (over the Igusa tower) introduced in \cite{kad} that plays a role analogous to that of $\overline{\uo\left(\ci\right)}$.

We define operators $A_\rho^\gamma\left(\padic\right)$ analogously to how we defined the operators $A_\rho^\gamma(\ci)$ in Definition \ref{pllbkopdefi}.
\begin{defi}\label{pllbkopdefipadic} Let $d = \left(d(\sigma)\right)_{\sigma\in\Sigma}$ be an element of $\ZZ_{\geq 0}^\Sigma$.  Let $\rho$ be a polynomial representation of $\gln \times\gln$, and let $\gamma$ be a subrepresentation of the restriction of $\rho\otimes \tau^d$ to $\gln\times\gln$.   Fix a projection $\pi_\gamma$ from $\rho$ onto $\gamma$.  We define an operator
\begin{align*}
A_{\rho}^\gamma\left(\padic\right): \mathcal{M}_\rho\left(\padic, U^{\varphi\oplus-\varphi}\right)\rightarrow \mathcal{M}_{\gamma}\left(\padic, U^\varphi\times U^{-\varphi}\right)
\end{align*}
by
\begin{align*}
A_\rho^\gamma(\padic)(f):=\pi_\mathcal{\gamma}\circ\left( D_\rho^d\left(\padic\right) f\right)\circ \iota_{\varphi, p}.
\end{align*}
\end{defi}

We also define operators $A_{m, l}^{k, \nu}\left(\padic\right)$ analogously to how we defined the operators $A_{m, l}^{k, \nu}\left(\ci\right)$.
\begin{defi}\label{defiamkpadic}
Let $m = \left(m\left(\sigma\right)\right)_{\sigma\in\Sigma}, l = \left(l\left(\sigma\right)\right)_{\sigma\in\Sigma}\in\ZZ_{m\geq 0}$.  Let $\rho = \det^{k+\nu}\otimes\det^{-\nu}$ be a representation of $\gln\times\gln$.  Let $\gamma$ be the subrepresentation of $\otimes_{\sigma\in\Sigma}\tau_{m(\sigma), l(\sigma), 0, 0}$ defined by $\gamma = \otimes_{\sigma\in\Sigma}\det^{m(\sigma)}\otimes \det^{l(\sigma)}\otimes \det^0\otimes \det^0$.
We define an operator
\begin{align*}
A_{m, l}^{k, \nu}\left(\padic\right): \mathcal{M}_{k, \nu}\left(\padic, U^{\varphi\oplus-\varphi}\right)\rightarrow \mathcal{M}_{(k+m+l, \nu-l), (k+m+l, \nu-m)}\left(\padic, U^\varphi\times U^{-\varphi}\right)
\end{align*}
by
\begin{align*}
A_{m, l}^{k, \nu}(\padic):=A_{\rho}^\gamma\left(\padic\right).
\end{align*}
\end{defi}

We use the notation
\begin{align*}
A_{m, l}^{k, \nu}(\padic, \sigma) = A_{m(\sigma), l(\sigma)}^{k(\sigma), \nu(\sigma)}(\padic)
\end{align*}
to denote the action of $A_{m, l}^{k, \nu}(\padic)$ on each automorphic form at $\sigma$, for all $\sigma\in\Sigma$.

\begin{prop}
The operators $A_{m, l}^{k, \nu}(\padic, \sigma)$ exist for all $\sigma\in\Sigma$ and for all nonnegative integers $m$ and $l$.
\end{prop}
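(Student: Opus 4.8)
The plan is to mirror, in the $p$-adic setting, the (one-line) argument that produced the $\ci$-operators $A_{m,l}^{k,\nu}(\ci,\sigma)$, whose only input was Proposition \ref{prop127}. Unwinding Definition \ref{defiamkpadic}, the operator $A_{m,l}^{k,\nu}(\padic)$ is by definition the operator $A_\rho^\gamma(\padic)$ of Definition \ref{pllbkopdefipadic} with $\rho=\det^{k+\nu}\otimes\det^{-\nu}$, with $d=m+l$, and with $\gamma=\otimes_{\sigma\in\Sigma}\det^{m(\sigma)}\otimes\det^{l(\sigma)}\otimes\det^0\otimes\det^0$ (up to the harmless twist by $\rho$) viewed as a subrepresentation of $\otimes_{\sigma\in\Sigma}\tau_{m(\sigma),l(\sigma),0,0}$. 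So it suffices to check that the three ingredients entering Definition \ref{pllbkopdefipadic} are available: (i) the $p$-adic weight-raising operator $D_\rho^d(\padic)\colon\mathcal{M}_\rho(\padic,U^{\varphi\oplus-\varphi})\to\mathcal{M}_{\rho\otimes\tau^d}(\padic,U^{\varphi\oplus-\varphi})$; (ii) the $p$-adic pullback $\iota_{\varphi,p}$ along the inclusion of Igusa towers over the ordinary loci associated with \eqref{avpullbk2}; and (iii) an actual occurrence of $\gamma$ as a subrepresentation of the restriction of $\rho\otimes\tau^d$ to the subgroup attached to $U^\varphi\times U^{-\varphi}$, so that the projection $\pi_\gamma$ is meaningful.

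Ingredients (i) and (ii) require nothing new. For (i): this is precisely the content of Section \ref{padicdiffops-section}, where $D_\rho^d(\padic)$ is constructed in \cite{EDiffOps} by the same geometric recipe as $D_\rho^d(\ci)$, with $M_\phi(\ci)$ replaced by the Igusa tower and the Hodge decomposition replaced by the unit root splitting $\hdr=\uo\oplus\unitroot$; the source and target make sense as sections over the Igusa tower by the discussion of rational representations in Section \ref{hwreps}. For (ii): the pullbacks $\iota_{\varphi,p}$ on Igusa towers, compatible with the group inclusions $\iota_\varphi$, $\iota_\cpct$ and with \eqref{avpullbk2}, were recorded in Section \ref{symmspacesshvarieties}. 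For (iii), the key point is that Proposition \ref{prop127} is stated over an arbitrary ring $S$, hence in particular over a $p$-adic ring; since $\rho$ is one-dimensional, the subrepresentations of the restriction of $\rho\otimes\tau^d$ are exactly those of the restriction of $\tau^d_{n,n}$, each tensored with the character $\rho$, and Proposition \ref{prop127} decomposes this restriction as $\bigoplus\tau_{d_1,d_2,d_3,d_4}$. Taking $(d_1,d_2,d_3,d_4)=(m(\sigma),l(\sigma),0,0)$ for each $\sigma$ identifies $\otimes_\sigma\tau_{m(\sigma),l(\sigma),0,0}$ with $\otimes_\sigma\tau_{r,r}^{m(\sigma)}\otimes\tau_{s,s}^{l(\sigma)}$, and Theorem \ref{thm127} exhibits inside each factor the explicit highest-weight eigenvector $\mathcal{P}(z)=\prod_j{\det}_j(z)^{e_j}$; for the one-dimensional $\gamma$ above this eigenvector is a power of the full determinant, confirming that $\gamma$ genuinely occurs. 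Hence $\pi_\gamma$ exists and $A_{m,l}^{k,\nu}(\padic,\sigma)$ is well-defined for every $\sigma\in\Sigma$ and all nonnegative $m,l$.

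I expect the only friction to be in (iii), namely making sure the representation-theoretic facts of Sections \ref{highwtirrpoly-section}--\ref{subrepres} descend to a $p$-adic base. Proposition \ref{prop127} already does, by design; Theorem \ref{thm127} as stated is over $\IC$, but it is invoked here only to locate the explicit determinant-power highest-weight vectors, and that identification is defined over $\ZZ$, hence valid over any $\OK$-algebra. No genuinely new geometric or analytic input is needed beyond what Sections \ref{symmspacesshvarieties} and \ref{padicdiffops-section} already supply: the proposition is essentially the verbatim $p$-adic transcription of its $\ci$-counterpart.
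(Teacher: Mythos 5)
Your proposal is correct and follows the same route as the paper, whose entire proof is the single line ``This follows immediately from Proposition \ref{prop127}''; your items (i) and (ii) are exactly the background the paper takes as already established in Sections \ref{symmspacesshvarieties} and \ref{padicdiffops-section}, and your item (iii) is the content of Proposition \ref{prop127}, which is indeed stated over an arbitrary ring $S$ precisely so that it applies $p$-adically.
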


\begin{proof}
This follows immediately from Proposition \ref{prop127}.
\end{proof}

When it is not clear from context what our choice of Hermitian form is, we shall write
\begin{align*}
A_{m, l}^{k, \nu, \varphi}\left(\padic\right)
\end{align*}
for the operator $A_{m, l}^{k, \nu}\left(\padic\right)$ dependent on the Hermitian form $\varphi$.

\begin{rmk}\label{otherrmk}
When $\gamma_{\kappa, \sigma}$ is an irreducible representation a subrepresentation of $\tau_{d_1, 0, 0, 0}$ (for some nonnegative integer $d_1$) of highest weight $\kappa$, we define $A_\rho^\kappa(\padic, \sigma) := A_\rho^{\gamma_{\kappa, \sigma}}(\padic, \sigma)$.  If $\gamma = \otimes_{\sigma\in\Sigma}\gamma_{\kappa, \sigma}$, we define $A_{\rho}^\kappa = \otimes_{\sigma\in\Sigma}A_{\rho}^{\kappa}(\padic, \sigma)$.  If $\rho$ is as in Equation \eqref{autfactorrho}, we define $A_{\kappa}^{k, \nu}\left(\padic\right) := A_\rho^{\kappa}\left(\padic\right).$  Note that when $\gamma_\kappa$ is the $m$-th power of the determinant and $a_\sigma\geq b_\sigma$ for all $\sigma\in\Sigma$, $A_{\kappa}^{k, l}\left(\padic\right) = A_{m, 0}^{k, l}\left(\padic\right)$.  So the operators $A_{\kappa}^{k, l}\left(\padic\right)$ are a natural generalization of the operators $A_{m, 0}^{k, l}\left(\padic\right)$.  We can clearly define analogous operators by considering irreducible sub-representations of $\tau_{0, d, 0, 0}$, $\tau_{0, 0, d, 0}$, or $\tau_{0, 0, 0, d}$ instead of $\tau_{d, 0, 0, 0}$.  Since the idea is the same for any of these representations and since the ideas are the same in our use of these operators below, we will just assign explicit notation to the operators $A_\kappa^{k, l}$.
\end{rmk}

Given an automorphic form $f$ defined over a subring $R$ of $\bar{\IQ}\cap\iota_p^{-1}\left(\OCp\right)$, we may view $f$ as a holomorphic (and, hence, $\ci$) automorphic form after the extension of scalars $\iota_\infty: R\hookrightarrow \IC$.  (Similarly, after the extension of scalars $\iota_p: R\hookrightarrow\OCp$, we may view $f$ as an element of the space of false $p$-adic automorphic forms introduced at the end of Section \ref{autoforms-section}.)  As explained in \cite{EDiffOps} and used in a crucial way in \cite{apptoSHL, apptoSHLvv}, for any automorphic form $f$ defined over a subring $R$ of $\bar{\IQ}\cap\iota_p^{-1}\left(\OCp\right)$, the values of $D_{\rho}^\gamma\left(\padic\right)f$ and $D_{\rho}^\gamma\left(\ci\right)f$ agree at CM points, up to a period.  The CM points are parametrized by the points of $G^\varphi\times G^{-\varphi}\subseteq G^{\varphi\oplus-\varphi}$ with $\varphi$ {\it definite}; so when $\varphi$ is definite, $A_\rho^{\gamma, \varphi}\left(\ci\right)f$ agrees with  $A_{\rho}^{\gamma, \varphi}\left(\padic\right)f$ up to a period.  If $\varphi$ is not definite, though, not all the points of $G^\varphi\times G^{-\varphi}\subseteq G^{\varphi\oplus-\varphi}$ parametrize CM points; in this case, we do not have a way to compare $A_{\rho}^{\gamma, \varphi}\left(\padic\right)f$ and $A_\rho^{\gamma, \varphi}\left(\ci\right)f$ (except at CM points), and we merely know that the first automorphic form is a $\ci$-automorphic form and the latter one is a $p$-adic automorphic form.

\subsubsection{More on $p$-adic automorphic forms and $p$-adic differential operators}

Now, we review some basic facts about the Igusa tower and $p$-adic automorphic forms.  For details, see \cite[Chapter 8]{hida}; as much as possible, we follow the notation of \cite{hida}.  Given the ordinary locus $S$ (i.e. of ordinary abelian varieties, together with additional structure, so $S= S_{\varphi\oplus-\varphi}$, for example) over a $p$-adic ring $R$, we write $S_m$ to denote $S\times_R R/p^mR$.  Recall that for each nonnegative integer $r$, there is a certain \'etale scheme $T_{m, r}$ together with an action of $\gl_n\left(\ZZ/p^r\ZZ\right)$.  The Igusa tower is the formal scheme $T_{\infty, \infty} = \varinjlim_m\varprojlim_rT_{m,r}$, which is an \'etale covering of $S_{\infty}:=\varinjlim_m S_m.$  We put $V_{m,r} :=H^0\left(T_{m,r}, \mathcal{O}_{T_{m,r}}\right)$.  We put $V:=\varprojlim_m\cup_rV_{m,r}$; as explained in \cite[Section 8.1.1]{hida}, the group $\gln\left(\ZZ_p\right)$ acts naturally on $V$.  When we need to be clear about the relevant Hermitian form $\varphi$, we write $V_\varphi$ in place of $V$.  Following \cite[Section 8.1.1]{hida}, we write $V^N$ for the subspace of $V$ fixed by the unipotent subgroup $N$, and we write $V^N[\kappa]$ for the subspace on which the torus $T$ in the Borel subgroup $B$ acts via $\kappa$ (where $\kappa$ denotes a highest weight, as above).  

As explained after \cite[Remark 8.1]{hida}, there is a map $\beta_\kappa$ such that for an irreducible representation $\rho_\kappa$ of highest weight $\kappa$, $\beta_\kappa: \mathcal{M}_{\rho_\kappa}\left(\padic, U^{\varphi\oplus-\varphi}\right)\rightarrow V^N[\kappa]$; note that this induces a map $\beta$ from the space of automorphic forms (viewed as sections over the ordinary locus) into the space of $p$-adic automorphic forms $V^N$.  (In brief, given $f\in\mathcal{M}_{\rho_\kappa}\left(\padic, U^{\varphi\oplus-\varphi}\right)$ and $A\in S$, we can view as an element $f(A)\in R_{\OK}[\kappa](R)$.  Then $\beta(f)$ is defined by $(\beta(f))(A) = \ell_{can}(f(A)):=(f(A))\left(1_n\right)$ for all $A$.)  Let $\incl$ denote the natural map from the space $\mathcal{M}_{\varphi\oplus-\varphi}$ of automorphic forms arising over the whole locus $M_{\varphi\oplus-\varphi}$ (rather than just over the ordinary locus $S$) to the space of automorphic forms over the ordinary locus.  Then, as explained in \cite[Theorem 8.14]{hida}\footnote{Although the statements in \cite{hida} often use $V$ instead of $V^N$, we alert the reader to the fact that Hida notes at the beginning of \cite[Section 8.3.2]{hida} that he writes $V$ to mean $V^N$ throughout Section 8.3.2.}, the composition $\beta\circ \incl$ is an injection from $\mathcal{M}_{\varphi\oplus -\varphi}$ into $V^N$.  Henceforth, whenever practical, we shall write $V$ in place of $V^N$ and $V_\varphi$ in place of $V_\varphi^N$ (following Hida's convention, as explained in the footnote).

We put $\theta_\kappa^{k, \nu}:=\theta_\kappa^{k, \nu}\left(\padic\right) := \beta\circ A_\kappa^{k, \nu}\left(\padic\right)$.  Observe that the image of $\theta_\kappa^{k, \nu}$ is contained in $V_\varphi\otimes V_{-\varphi}$.

\subsubsection{Action of the $p$-adic differential operators on $q$-expansions}\label{qexp-section}

In \cite[Appendix A4]{sh}, Shimura discusses Fourier expansions of automorphic forms on $U^{\varphi\oplus-\varphi}$; he also describes the pullbacks to smaller groups (e.g. $U^\varphi\times U^{-\varphi}$), in which case one obtains a Fourier-Jacobi expansions with theta functions as the coefficients.  For $U^{\varphi\oplus-\varphi}$ (i.e. a unitary group of signature $(n,n)$), there are algebraic $q$-expansions and an algebraic $q$-expansion principle (\cite[Prop 7.1.2.15]{la}) and a $p$-adic $q$-expansion principle over the Igusa tower (\cite[Corollary 10.4]{hi05}, \cite[Section 8.4]{hida}).  (N. B. The algebraic $q$-expansion principle in \cite[Prop 7.1.2.15]{la} only applies to scalar-weight automorphic forms; however, this is sufficient for our purposes.  Indeed, for vector-weight automorphic forms, we only need a $p$-adic $q$-expansion principle, and this is covered by \cite[Corollary 10.4]{hi05} and \cite[Section 8.4]{hida}.)  When $n$ is even and the signature of $\varphi$ is $\left(\frac{n}{2}, \frac{n}{2}\right)$, one can again directly pull back the algebraic $q$-expansions of automorphic forms on $U^{\varphi\oplus-\varphi}$ to $U^\varphi\times U^{-\varphi}$.  For signature $(a, b)$ with $a\neq b$, the situation is more complicated; by combining the results and ideas in the present paper with the recent results in \cite{CEFMV}, though, the author - together with Fintzen, Mantovan, and Varma - will explain in a forthcoming paper how to handle arbitrary signatures.  (Note that the ideas from the present paper are key ingredients even in the forthcoming paper handling the case of arbitrary signature.)

In Section \ref{measure-section}, we will construct $p$-adic measures whose values include certain $p$-adic automorphic forms on $U^\varphi\times U^{-\varphi}$.  Like in \cite{kaCM, apptoSHL, apptoSHLvv}, our methods rely crucially on the $p$-adic $q$-expansion principle, and thus, our discussion in Section \ref{measure-section} is only as extensive as the present state of the literature on $q$-expansion principles allows.

As explained in \cite[Section 8.4]{hida}, to apply the $p$-adic $q$-expansion principle, it is enough to check the cusps parametrized by points of $GM_+\left(\adeles_E\right)$, where $GM_+$ denotes a certain Levi subgroup of $GU_+$.  (More details about cusps appear in \cite[Chapter 8]{hida} and, as a summary, in \cite{apptoSHLvv}; we will not need the details here.)

The precise action of the differential operators $D_{\rho}^d\left(\padic\right)$ is given in \cite[Section IX]{EDiffOps} and applied in \cite{apptoSHL, apptoSHLvv}.  Suppose that $a_\sigma = b_\sigma$ for all $\sigma\in\Sigma$.  So $n = 2a_\sigma = 2b_\sigma$ for all $\sigma\in\Sigma$.  In this case, if $f$ is a $p$-adic automorphic form on $U(n,n)$ of weight $\rho$ with $q$-expansion
\[
f(q) = \sum_{\beta}c(\beta) q^\beta
\]
and we write $\beta = \left(\begin{smallmatrix}A_{11} & { }^t\overline{A_{21}}\\ A_{21}& A_{22}\end{smallmatrix}\right)$ with $A_{11}, A_{22}\in\hern(K)$ and $A_{21}\in M_{n\times n}(K)$,
it follows from the description of the operators $D_\rho^d\left(\padic\right)$ in \cite{EDiffOps} that
\begin{align*}
\left(\theta_{\kappa}^{k, \nu}(\padic)f\right)(q) &= \sum_{\beta =  \left(\begin{smallmatrix}A_{11} & { }^t\overline{A_{21}}\\ A_{21}& A_{22}\end{smallmatrix}\right)}\phi_{\kappa}\left(A_{21}\right)c(\beta)q_{11}^{A_{11}}q_{22}^{A_{22}}\\
& = \sum_{A_{11}, A_{22}}\sum_{A_{21}}\phi_{\kappa}\left(A_{21}\right)c(\beta)q_{11}^{A_{11}}q_{22}^{A_{22}},
\end{align*}
with $\phi_\kappa$ a polynomial dependent on $\kappa$ (namely a highest weight vector).  Note that we shall also write $\phi_\kappa\left( \left(\begin{smallmatrix}A_{11} & { }^t\overline{A_{21}}\\ A_{21}& A_{22}\end{smallmatrix}\right)\right)$ to mean $\phi_\kappa\left(A_{21}\right)$.  When $\kappa$ is the (highest weight for the) $d$-th power of the determinant, $\phi_\kappa = \det^d$.  As noted in Remark \ref{otherrmk}, we can consider other natural analogues of the operators $A_\kappa^{k, \nu}\left(\padic\right)$ (and hence, of the operators $\theta_\kappa^{k, \nu}\left(\padic\right)$); in these cases, we can similarly describe the $q$-expansions.

\section{$p$-adic families of automorphic forms}\label{measure-section}

Throughout this section, we restrict our discussion to the case in which the Hermitian form $\varphi$ such that at all places $\sigma\in\Sigma$ such that $\varphi_\sigma$ is not definite, the signature of $\varphi_\sigma$ is $\left(n/2, n/2\right)$.  So if $\varphi_\sigma$ is not definite at some $\sigma$, then we must also assume that $n$ is even.  (By combining the results and ideas in the present paper with the recent results in \cite{CEFMV}, the author - together with Fintzen, Mantovan, and Varma - will explain in a forthcoming paper how to extend this discussion to arbitrary signatures.  Note that the ideas from the present paper are key ingredients in that paper.)   Also, throughout this section, let $N\subseteq \gln$ be as in Section \ref{hwreps}, and let
\begin{align*}
N_\varphi &:= \prod_{v\in\Sigma}N_{\varphi, v}\\
N_{\varphi, v}&:= \begin{cases}
N\left({\Oe}_v\right)&\mbox{ if } a_v b_v = 0\\
N_{n/2}\left({\Oe}_v\right)\times N_{n/2}\left({\Oe}_v\right) = \left\{\begin{pmatrix}A &0\\ 0 & B\end{pmatrix}\in N\left({\Oe}_v\right)\right\}&\mbox{ otherwise.}
\end{cases}
\end{align*}

In this section, we construct $V_\varphi\otimes V_{-\varphi}$-valued measures.  (Recall that by $V_\varphi\otimes V_{-\varphi}$, we mean $\left(V_\varphi\otimes V_{-\varphi}\right)^{N_\varphi}$)  In brief, for any (profinite) $p$-adic ring $R$, an $R$-valued $p$-adic measure on a (profinite) compact, totally disconnected topological space $Y$ is a $\ZZ_p$-linear map
\begin{align*}
\mu: \mathcal{C}(Y, \ZZ_p)\rightarrow R,
\end{align*}
or equivalently (as explained in \cite[Section 4.0]{kaCM}), an $R'$-linear map
\begin{align*}
\mu: \mathcal{C}(Y, R')\rightarrow R
\end{align*}
for any $p$-adic ring $R'$ such that $R$ is an $R'$-algebra.  Instead of $\mu(f)$, one typically writes
\begin{align*}
\int_{Y}f d\mu.
\end{align*}
Additional details about $p$-adic measures appear in \cite{wa} and \cite[Section 4.0]{kaCM}.

\subsection{A common starting point}

All the measures $\mu_{\varphi}$ constructed below have a common starting point, namely the Siegel Eisenstein series and the Eisenstein measure constructed in \cite{apptoSHLvv}.

Following the notation of \cite{apptoSHLvv}, for $k\in \ZZ$ and $\nu = \left(\nu(\sigma)\right)_{\sigma\in\Sigma}\in\ZZ^\Sigma$, we denote by $\mathbf{N}_{k, \nu}$ the function
\begin{align*}
\mathbf{N}_{k, \nu}: K^\times&\rightarrow K^\times\\
b&\mapsto \prod_{\sigma\in\Sigma}\sigma(b)^{k+2\nu(\sigma)}\left(\sigma(b)\overline{\sigma}(b)\right)^{-\nu(\sigma)}.
\end{align*}
So for all $b\in\mathcal{O}_E^\times$, $\mathbf{N}_{k, \nu}(b) = \mathbf{N}_{E/\IQ}(b)^k$.

\begin{thm}[Theorem 2 in \cite{apptoSHLvv}]\label{thm2}
Let $R$ be an $\OK$-algebra, let $\nu = \left(\nu(\sigma)\right)\in\ZZ^\Sigma$, and let $k\geq n$ be an integer.    Let
\begin{align*}
F: \left(\OK\otimes\ZZ_p\right)\times M_{n\times n}\left(\Oe\otimes\ZZ_p\right)\rightarrow R
\end{align*}
be a locally constant function supported on $\left(\OK\otimes\ZZ_p\right)^\times\times M_{n\times n}\left(\Oe\otimes\ZZ_p\right)$ that satisfies
\begin{align}\label{equnknualakacm}
F\left(ex, \mathbf{N}_{K/E}(e^{-1})y\right) = \mathbf{N}_{k, \nu}(e)F\left(x, y\right)
\end{align}
for all $e\in \OK^\times$, $x\in \OK\otimes\ZZ_p$, and $y\in M_{n\times n}\left(\Oe \otimes\ZZ_p\right)$.  There is an automorphic form $G_{k, \nu, F}$ (on $U(n,n)$) of weight $(k, \nu)$ defined over $R$ whose $q$-expansion at a cusp $m\in GM_+\left(\adeles_E\right)$ is of the form $\sum_{0<\beta\in L_m}c(\beta)q^\beta$ (where $L_{m}$ is the lattice in $\hern(K)$ determined by $m$), with $c(\beta)$ a finite $\ZZ$-linear combination of terms of the form
\begin{align*}
F\left(a, \mathbf{N}_{K/E}(a)^{-1}\beta\right)\mathbf{N}_{k, \nu}\left(a^{-1}\det\beta\right)\mathbf{N}_{E/\IQ}\left(\det\beta\right)^{-n}
\end{align*}
(where the linear combination is a sum over a finite set of $p$-adic units $a\in K$ dependent upon $\beta$ and the choice of cusp $m$).  When $R = \IC$, these are the Fourier coefficients at $s=\frac{k}{2}$ of the $\ci$-automorphic form $G_{k, \nu, F}\left(z, s\right)$ (which is holomorphic at $s=\frac{k}{2}$) defined in \cite[Lemma 9]{apptoSHLvv}.
\end{thm}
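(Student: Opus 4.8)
The plan is to realize $G_{k,\nu,F}$ as the arithmetic incarnation of a classical Siegel Eisenstein series on $U(n,n)=U^\eta$ attached to a carefully chosen adelic section, following Shimura's construction in \cite{sh} and its $p$-adic refinement in \cite{kaCM, apptoSHL}. Let $P\subseteq U^\eta$ be the Siegel parabolic stabilizing a maximal isotropic subspace of $W=V\oplus-V$, with Levi factor isomorphic to $\Res_{K/E}\gln$. For a complex parameter $s$ and a restricted tensor product section $f_s=\bigotimes_v f_{s,v}$ of the degenerate principal series $\Ind_{P(\adeles_E)}^{U^\eta(\adeles_E)}$ induced from $|\det|^s$ twisted by a Hecke character reflecting $(k,\nu)$, one forms $E(g,s,f_s)=\sum_{\gamma\in P(E)\backslash U^\eta(E)}f_s(\gamma g)$, convergent for $\Re(s)$ large and meromorphically continued in $s$. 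The point $s=k/2$ with $k\geq n$ lies in (or on the edge of) Shimura's range of holomorphy; this is what the hypothesis $k\geq n$ buys us.

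First I would pin down the local sections. At each archimedean $\sigma$, take the weight-$(k(\sigma),\nu(\sigma))$ vector, so that $E(\,\cdot\,,k/2,f_{k/2})$ is a holomorphic automorphic form of that weight --- exactly Shimura's archimedean choice. At $p$, take the section determined by $F$ via a partial Fourier transform along the unipotent radical of $P$, so that the local Siegel integral of $f_{s,p}$ against the additive character indexed by $\beta$ returns $F$ evaluated at the corresponding argument, up to the normalizing factors $\mathbf{N}_{k,\nu}$ and $\mathbf{N}_{E/\IQ}(\det\beta)^{-n}$; the support condition on $F$ (on $(\OK\otimes\ZZ_p)^\times\times M_{n\times n}(\Oe\otimes\ZZ_p)$) reflects the level structure at $p$. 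Away from $p$ and $\infty$, take the normalized spherical section at the good places and a fixed section at the (level-controlling) ramified places.

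Second I would carry out the Fourier--Siegel expansion. Unfolding over $P(E)\backslash U^\eta(E)$ expresses the $\beta$-th Fourier coefficient of $E(\,\cdot\,,k/2,f_{k/2})$ as a product of local Siegel (Whittaker) integrals $\prod_v W_v(\beta,f_{k/2,v})$. The archimedean factors at $s=k/2$ are Shimura's confluent hypergeometric functions, which degenerate at this value to the elementary expressions yielding a holomorphic form; the $p$-factor yields terms $F(a,\mathbf{N}_{K/E}(a)^{-1}\beta)\,\mathbf{N}_{k,\nu}(a^{-1})$; the good finite places contribute ratios of abelian $L$-factors which for $\beta>0$ and $k\geq n$ are polynomials, producing the factor $\mathbf{N}_{E/\IQ}(\det\beta)^{-n}$ and the finite $\ZZ$-linear combination over the $p$-unit parameters $a$ attached to $\beta$; the ramified places contribute fixed constants absorbed into the $\ZZ$-coefficients. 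Reassembling yields the asserted shape of $c(\beta)$. The equivariance \eqref{equnknualakacm} of $F$ under $\OK^\times$ is precisely the $\OK^\times$-invariance forced by the coset space $P(E)\backslash U^\eta(E)$ together with the weight, and it is what makes the sum over the $a$'s well defined.

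Finally I would invoke the $q$-expansion principle for the ``defined over $R$'' assertion. Since the $c(\beta)$ manifestly lie in $R$ (they are $\ZZ$-combinations of values of $F$ times explicit $R$-valued quantities), the algebraic $q$-expansion principle for scalar weight (\cite[Prop 7.1.2.15]{la}; or the $p$-adic one of \cite[Corollary 10.4]{hi05}, \cite[Section 8.4]{hida}) shows that the holomorphic form with these $q$-expansions --- first over $\OK$, then descended by base change --- is defined over $R$; checking one cusp in each component (the cusps parametrized by $GM_+(\adeles_E)$) suffices. Matching Fourier coefficients then identifies this algebraic form with the $\ci$-form $G_{k,\nu,F}(z,s)$ of \cite[Lemma 9]{apptoSHLvv} at $s=k/2$. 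I expect the main obstacle to be the precise local computation at $p$: pinning down $f_{s,p}$ from $F$ and showing its local Siegel series evaluates exactly to $F(a,\mathbf{N}_{K/E}(a)^{-1}\beta)\,\mathbf{N}_{k,\nu}(a^{-1}\det\beta)\,\mathbf{N}_{E/\IQ}(\det\beta)^{-n}$, with all the bookkeeping of the $\OK^\times$-action, the passage between $F$ and its Fourier transform, and the normalization of measures --- this is the heart of the adelic analysis carried out in \cite{apptoSHL, apptoSHLvv} (compare \cite{kaCM}). A secondary point is verifying that the good-place Siegel series are genuinely polynomial in the relevant range, so that no denominators intrude on $c(\beta)$; this again follows from $k\geq n$ and positivity of $\beta$.
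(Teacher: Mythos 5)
The paper you are reading does not actually prove this statement: it is imported verbatim as Theorem 2 of \cite{apptoSHLvv} and used as a black box, so the only meaningful comparison is with the proof in that cited source. Your outline --- realizing $G_{k,\nu,F}$ as a Siegel Eisenstein series on $U(n,n)$ attached to an adelic section whose component at $p$ is cut out from $F$ by a partial Fourier transform, unfolding the Fourier--Siegel expansion into products of local integrals (polynomial Siegel series at the good finite places for $k\geq n$ and $\beta>0$, confluent hypergeometric factors at $\infty$ degenerating to elementary holomorphic expressions at $s=k/2$), and then applying the $q$-expansion principle at the cusps parametrized by $GM_+(\adeles_E)$ to descend to $R$ --- is essentially the argument of \cite{apptoSHL, apptoSHLvv}, which in turn follows Shimura's Fourier-coefficient computations and Katz's treatment of the Hilbert modular case; you also correctly locate the real content in the local computation at $p$ and in the absence of denominators from the normalized Siegel series, which is exactly where the work in the cited papers lies.
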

As explained in \cite[Theorem 12]{apptoSHLvv}, which we state below, when $R$ is a $p$-adic ring, we can view $G_{k, \nu, F}$ as a $p$-adic automorphic form (over the ordinary locus).

We define $G_{k, \nu, F, \varphi}$ to be the pullback of the automorphic form $G_{k, \nu, F}$. 

\begin{thm}[Theorem 12 in \cite{apptoSHLvv}]\label{thm12}
Let $R$ be a (profinite) $p$-adic $\OK$-algebra.  Fix an integer $k\geq n$, and let $\nu = \left(\nu(\sigma)\right)_{\sigma\in\Sigma}\in\ZZ^\Sigma$.  Let
\begin{align*}
F:\left(\OK\otimes\ZZ_p\right)\times M_{n\times n}\left(\Oe\otimes\ZZ_p\right)\rightarrow R
\end{align*}
be a continuous function supported on $\left(\OK\otimes\ZZ_p\right)^\times\times\gln\left(\Oe\otimes\ZZ_p\right)$ that satisfies
\begin{align*}
F\left(ex, \mathbf{N}_{K/E}(e)^{-1}y\right) = \mathbf{N}_{k, \nu}(e)F(x, y)
\end{align*}
for all $e\in\OK^\times$, $x\in\OK\otimes\ZZ_p$, and $y\in\gln\left(\Oe\otimes \ZZ_p\right)$.  Then there exists a $p$-adic automorphic form $G_{k, \nu, F}$ in $V_{\varphi\oplus-\varphi}$ whose $q$-expansion at a cusp $m\in GM_+\left(\adeles_E\right)$ is of the form $\sum_{0<\beta\in L_m}c(\beta)q^\beta$ (where $L_m$ is the lattice in $\hern(K)$ determined by $m$), with $c(\beta)$ a finite $\ZZ$-linear combination of terms of the form
\begin{align*}
F\left(a, \mathbf{N}_{K/E}(a)^{-1}\beta\right)\mathbf{N}_{k, \nu}\left(a^{-1}\det\beta\right)\mathbf{N}_{E/\IQ}\left(\det\beta\right)^{-n}
\end{align*}
(where the linear combination is a sum over a finite set of $p$-adic units $a\in K$ dependent upon $\beta$ and the choice of cusp $m$).
\end{thm}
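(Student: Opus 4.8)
The plan is to extend the assignment $F\mapsto G_{k,\nu,F}$ from locally constant functions $F$ (where it is provided by Theorem \ref{thm2}) to continuous functions $F$ by a $p$-adic continuity argument, the two key inputs being the $p$-adic $q$-expansion principle (\cite[Corollary 10.4]{hi05}, \cite[Section 8.4]{hida}) and the fact that the space $V_{\varphi\oplus-\varphi}$ of $p$-adic automorphic forms is $p$-adically complete and separated by its very construction as $\varprojlim_m\bigcup_r V_{m,r}$.

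First I would reduce to locally constant $F$. Since $(\OK\otimes\ZZ_p)\times M_{n\times n}(\Oe\otimes\ZZ_p)$ is compact and totally disconnected and $R$ is profinite, $F$ is a uniform limit of locally constant $R$-valued functions. The only thing to arrange is that the approximants share the support of $F$ inside $(\OK\otimes\ZZ_p)^\times\times\gln(\Oe\otimes\ZZ_p)$ and obey the transformation law \eqref{equnknualakacm}; for the level-$p^j$ truncation this is a condition under the \emph{finite} group $(\OK\otimes\ZZ/p^j\ZZ)^\times$, through which $\OK^\times$ acts on level-$p^j$ data, so averaging the truncation against that finite action, twisted by $\mathbf{N}_{k,\nu}$, yields locally constant functions $F_j$ with the required support and equivariance and with $F_j\to F$ uniformly.

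For each $j$, Theorem \ref{thm2} now produces an automorphic form $G_{k,\nu,F_j}$ of weight $(k,\nu)$ over $R$, which -- $R$ being $p$-adic -- we view as an element of $V_{\varphi\oplus-\varphi}$ via $\beta\circ\incl$ (\cite[Theorem 8.14]{hida}). To see that these converge in $V_{\varphi\oplus-\varphi}$, it suffices, by the $p$-adic $q$-expansion principle, to check that their $q$-expansions at the cusps parametrized by $GM_+(\adeles_E)$ form a Cauchy sequence. By Theorem \ref{thm2} the coefficient of $q^\beta$ there is a $\ZZ$-linear combination, with combinatorial data depending only on the cusp and on $\beta$ (hence independent of $j$), of terms $F_j(a,\mathbf{N}_{K/E}(a)^{-1}\beta)\,\mathbf{N}_{k,\nu}(a^{-1}\det\beta)\,\mathbf{N}_{E/\IQ}(\det\beta)^{-n}$. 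On the support of $F_j$ one has $\mathbf{N}_{K/E}(a)^{-1}\beta\in\gln(\Oe\otimes\ZZ_p)$, so $\det\beta$ is a $p$-adic unit and the last two factors are $p$-adic units; hence the $q$-expansion coefficients of $G_{k,\nu,F_j}-G_{k,\nu,F_{j'}}$ are bounded in $p$-adic absolute value by $\sup|F_j-F_{j'}|$, uniformly in $\beta$ and in the cusp. By completeness of $V_{\varphi\oplus-\varphi}$ the limit $G_{k,\nu,F}:=\lim_j\beta(\incl(G_{k,\nu,F_j}))$ exists, and (the $q$-expansion map being continuous) its $q$-expansion is the coefficientwise limit, which has the asserted shape with $c(\beta)$ the same $\ZZ$-linear combination of the terms $F(a,\mathbf{N}_{K/E}(a)^{-1}\beta)\,\mathbf{N}_{k,\nu}(a^{-1}\det\beta)\,\mathbf{N}_{E/\IQ}(\det\beta)^{-n}$.

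The step I expect to be the main obstacle is the passage from convergence of $q$-expansions to convergence inside $V_{\varphi\oplus-\varphi}$: a priori the limit of the $q$-expansions is only a formal series, and one must know it is the $q$-expansion of an honest element of $V_{\varphi\oplus-\varphi}$. This is exactly what the $p$-adic $q$-expansion principle supplies, once combined with $p$-adic completeness of $V^N$ and the injectivity of $q$-expansion on the relevant piece: a sequence whose $q$-expansions are Cauchy is itself Cauchy in $V_{\varphi\oplus-\varphi}$, so its limit lies in $V_{\varphi\oplus-\varphi}$ and realizes the limiting $q$-expansion. A secondary technical point, noted above, is the level-by-level finite averaging needed to keep the approximants $F_j$ exactly equivariant under $\OK^\times$.
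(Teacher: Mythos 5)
The paper does not reprove this statement — it is quoted verbatim from \cite[Theorem 12]{apptoSHLvv} — but your argument (approximate the continuous datum $F$ by equivariant locally constant functions, apply Theorem \ref{thm2} to each, observe that the $q$-expansion coefficients differ by $p$-adic units times $\sup|F_j-F_{j'}|$, and conclude by the $p$-adic $q$-expansion principle together with completeness of $V_{\varphi\oplus-\varphi}$) is exactly the mechanism of the cited proof, which follows Katz's template in \cite[Section 4]{kaCM}. Your identification of the main obstacle — upgrading coefficientwise Cauchy $q$-expansions to a Cauchy sequence in $V^N$, which is what the $q$-expansion principle supplies — is also the correct one.
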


Note that a similar argument to the one in the proof of Theorem \ref{thm12} in \cite[Theorem 12]{apptoSHLvv}) gives the following analogue in $V_\varphi\otimes V_{-\varphi}$.
\begin{lem}\label{thm12forphi}
Let $R$ be a (profinite) $p$-adic $\OK$-algebra.  
Fix an integer $k\geq n$, and let $\nu = \left(\nu(\sigma)\right)_{\sigma\in\Sigma}\in\ZZ^\Sigma$.  Let
\begin{align*}
F:\left(\OK\otimes\ZZ_p\right)\times M_{n\times n}\left(\Oe\otimes\ZZ_p\right)\rightarrow R
\end{align*}
be a continuous function supported on $\left(\OK\otimes\ZZ_p\right)^\times\times\gln\left(\Oe\otimes\ZZ_p\right)$ that satisfies
\begin{align*}
F\left(ex, \mathbf{N}_{K/E}(e)^{-1}y\right) = \mathbf{N}_{k, \nu}(e)F(x, y)
\end{align*}
for all $e\in\OK^\times$, $x\in\OK\otimes\ZZ_p$, and $y\in\gln\left(\Oe\otimes \ZZ_p\right)$.  Then there exists a $p$-adic automorphic form $G_{k, \nu, F, \varphi}$ in $V_\varphi\otimes V_{-\varphi}$ defined by $G_{k, \nu, F, \varphi} = \left\{G_{k, \nu, F\mod p^j, \varphi}\right\}\in\varprojlim_j\left(V_{\varphi}\otimes V_{-\varphi}\right)\otimes\left(R/p^j R\right)$.  When $\varphi$ is non-definite at all places, the $q$-expansion of $G_{k, \nu, F, \varphi}$ is of the form $\sum_{0<\beta = \begin{pmatrix}A_{11} & A_{12}\\ A_{21} & A_{22}\end{pmatrix}\in L_m}c(\beta)q_{11}^{A_{11}}q_{22}^{A_{22}}$ (where $L_m$ is a lattice in $\hern(K)$ determined by a cusp $m\in GM_+\left(\adeles_E\right)$), with $c(\beta)$ a finite $\ZZ$-linear combination of terms of the form
\begin{align*}
F\left(a, \mathbf{N}_{K/E}(a)^{-1}\beta\right)\mathbf{N}_{k, \nu}\left(a^{-1}\det\beta\right)\mathbf{N}_{E/\IQ}\left(\det\beta\right)^{-n}
\end{align*}
(where the linear combination is a sum over a finite set of $p$-adic units $a\in K$ dependent upon $\beta$ and the choice of cusp $m$).
\end{lem}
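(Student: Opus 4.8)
The plan is to follow the proof of \cite[Theorem 12]{apptoSHLvv} (stated here as Theorem \ref{thm12}), replacing $U^{\varphi\oplus-\varphi}$ by $U^\varphi\times U^{-\varphi}$ by pulling back along the inclusion of moduli spaces \eqref{avpullbk2} (equivalently, along $\iota_{\varphi,p}$ on the Igusa towers). First I would reduce to locally constant test functions: since $\left(\OK\otimes\ZZ_p\right)^\times\times\gln\left(\Oe\otimes\ZZ_p\right)$ is compact and totally disconnected and $R$ is profinite, $F$ is a projective limit of locally constant functions $F_j$ valued in $R/p^jR$, which can be chosen compatibly under the transition maps $R/p^{j+1}R\to R/p^jR$ and to satisfy the same $\OK^\times$-transformation law as $F$. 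For each $j$, Theorem \ref{thm2} produces an honest (algebraic) automorphic form $G_{k,\nu,F_j}$ on $U(n,n)$ of weight $(k,\nu)$ over $R/p^jR$, with the stated $q$-expansion at each cusp $m\in GM_+(\adeles_E)$.

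Next I would pull back and pass to the limit. Restricting $G_{k,\nu,F_j}$ along \eqref{avpullbk2} gives an automorphic form on $U^\varphi\times U^{-\varphi}$; applying the canonical map $\beta$ of \cite[Theorem 8.14]{hida} to each factor (and using that pullback is functorial) produces an element $G_{k,\nu,F_j,\varphi}$ of $\left(V_\varphi\otimes V_{-\varphi}\right)\otimes\left(R/p^jR\right)$, which is invariant under $N_\varphi$ and hence lies in the $N_\varphi$-fixed part that the statement abbreviates $V_\varphi\otimes V_{-\varphi}$. Since the $F_j$ are compatible and both restriction and $\beta$ commute with reduction mod $p^j$, the $G_{k,\nu,F_j,\varphi}$ form a compatible system and define
\[
G_{k,\nu,F,\varphi}=\left\{G_{k,\nu,F\bmod p^j,\varphi}\right\}\in\varprojlim_j\left(V_\varphi\otimes V_{-\varphi}\right)\otimes\left(R/p^jR\right),
\]
as claimed.

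For the $q$-expansion, assume $\varphi$ is non-definite at every place, so by the running hypothesis $\varphi_\sigma$ has signature $(n/2,n/2)$ for all $\sigma$ and $n$ is even. As recalled at the start of Section \ref{qexp-section}, in this situation one may directly pull back the algebraic $q$-expansion of an automorphic form on $U^{\varphi\oplus-\varphi}$ to $U^\varphi\times U^{-\varphi}$ — this is the trivial-$\phi_\kappa$ (i.e.\ $d=0$) case of the computation displayed in Section \ref{qexp-section}: writing $\beta=\left(\begin{smallmatrix}A_{11}&A_{12}\\A_{21}&A_{22}\end{smallmatrix}\right)\in L_m$ with $A_{11},A_{22}\in\hern(K)$, the coefficient of $q_{11}^{A_{11}}q_{22}^{A_{22}}$ in the pullback is $\sum_{A_{21}}c(\beta)$, and by Theorem \ref{thm2} each $c(\beta)$ is the $\ZZ$-linear combination of terms of the stated shape. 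Taking the limit in $j$ gives the asserted $q$-expansion of $G_{k,\nu,F,\varphi}$, and the $p$-adic $q$-expansion principle (\cite[Corollary 10.4]{hi05}, \cite[Section 8.4]{hida}) applied at the cusps in $GM_+(\adeles_E)$ shows that this expansion determines the form, removing any ambiguity.

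The step I expect to be the main obstacle is making the phrase ``direct pullback of $q$-expansions'' fully precise: one must track how the Fourier--Jacobi parameter of the Eisenstein series on $U(n,n)$ degenerates under restriction to $U^\varphi\times U^{-\varphi}$, match up the lattices $L_m$ and the cusp labels on the two sides, and verify compatibility with the maps $\beta$ on each factor and with reduction mod $p^j$, so that the inverse-limit element is well defined and has exactly the stated expansion. Granting the corresponding facts already established for $U^{\varphi\oplus-\varphi}$ in \cite{apptoSHLvv} and the compatibility recorded in Section \ref{qexp-section}, the rest is bookkeeping.
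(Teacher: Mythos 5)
Your proposal matches the paper's intended argument: the paper gives no separate proof of this lemma, stating only that ``a similar argument to the one in the proof of Theorem 12 in \cite{apptoSHLvv}'' yields the result, and your outline — approximating $F$ by compatible locally constant functions, invoking Theorem \ref{thm2} mod $p^j$, pulling back along \eqref{avpullbk2} (equivalently $\iota_{\varphi,p}$), passing to the inverse limit, and reading off the $q$-expansion via the direct pullback available in the $(n/2,n/2)$ case together with the $p$-adic $q$-expansion principle — is precisely that adaptation. The obstacle you flag (making the pullback of $q$-expansions precise and matching cusp data) is exactly the bookkeeping the paper delegates to Section \ref{qexp-section} and \cite{apptoSHLvv}, so nothing is missing.
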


As explained in \cite[Corollary 13]{apptoSHLvv}, as $p$-adic automorphic forms on $U(n,n)$,
\begin{align*}
G_{k, \nu, F} = G_{n, 0, \mathbf{N}_{k-n, \nu}\left(x^{-1}\mathbf{N}_{K/E}^n(x)\det y\right)F(x,  y)},
\end{align*}
where $F$ is as in Theorem \ref{thm12} and $\mathbf{N}_{k-n, \nu}\left(x^{-1}\mathbf{N}_{K/E}^n(x)\det y\right)F(x,  y)$ denotes the function
\begin{align*}
(x, y)\mapsto \mathbf{N}_{k-n, \nu}\left(x^{-1}\mathbf{N}_{K/E}^n(x)\det y\right)F(x,  y).
\end{align*}
A similar argument shows that, as elements of $V_\varphi\otimes V_{-\varphi}$,
\begin{align*}
G_{k, \nu, F, \varphi} = G_{n, 0, \mathbf{N}_{k-n, \nu}\left(x^{-1}\mathbf{N}_{K/E}^n(x)\det y\right)F(x,  y), \varphi},
\end{align*}
where $F$ is as in Theorem \ref{thm12forphi} and $\mathbf{N}_{k-n, \nu}\left(x^{-1}\mathbf{N}_{K/E}^n(x)\det y\right)F(x,  y)$ denotes the function
\begin{align*}
(x, y)\mapsto \mathbf{N}_{k-n, \nu}\left(x^{-1}\mathbf{N}_{K/E}^n(x)\det y\right)F(x,  y).
\end{align*}

\subsection{$V_\varphi\otimes V_{-\varphi}$-valued measures}

In analogue with \cite[Lemma (4.2.0)]{kaCM} (which handles the case of Hilbert modular forms) and \cite[Lemma 14]{apptoSHLvv}, we have the following lemma
\begin{lem}\label{HFlem}Let $R$ be a $p$-adic $\OK$-algebra.  Then the inverse constructions
\begin{align}
H(x, y) & = \frac{1}{\mathbf{N}_{n, 0}\left(x\mathbf{N}_{K/E}(x)^{-n}\det y\right)}F\left(x, y^{-1}\right)\nonumber\\
F(x, y) & = \frac{1}{\mathbf{N}_{n, 0}\left(x^{-1}\mathbf{N}_{K/E}(x)^{n}\det y\right)}H\left(x, y^{-1}\right)\label{FHrelationshipequ}
\end{align}
give an $R$-linear bijection between the set of continuous $R$-valued functions
\begin{align*}
F: \left(\OK\otimes\ZZ_p\right)^\times\times \gln\left(\Oe\otimes\ZZ_p\right)\rightarrow R
\end{align*}
satisfying
\begin{align}\label{Fcondequ}
F\left(ex, \mathbf{N}_{K/E}(e)^{-1}y\right) = \mathbf{N}_{n, 0}(e)F(x, y)
\end{align}
for all $e\in \OK^\times$ and the set of continuous $R$-valued functions
\begin{align*}
H: \left(\OK\otimes\ZZ_p\right)^\times\times \gln\left(\Oe\otimes\ZZ_p\right)\rightarrow R
\end{align*}
satisfying 
\begin{align*}
H(ex, \mathbf{N}_{K/E}(e)y) = H(x, y)
\end{align*}
for all $e\in \OK^\times$.
\end{lem}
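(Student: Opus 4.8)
The plan is to verify the statement directly: the two displayed formulas are mutually inverse assignments, each lands in the asserted target space, each is $R$-linear, and each sends continuous functions to continuous functions. This is a bookkeeping argument in the spirit of \cite[Lemma (4.2.0)]{kaCM} and \cite[Lemma 14]{apptoSHLvv}, and the only thing that makes it nontrivial is keeping track of which quantities are scalars (in $\OK\otimes\ZZ_p$ or $\Oe\otimes\ZZ_p$) and which are $n\times n$ matrices, so that $\mathbf{N}_{K/E}$, $\det$, and matrix inversion interact correctly; I expect that to be the main, and essentially only, obstacle. First I would record that the formulas make sense: for $x\in\left(\OK\otimes\ZZ_p\right)^\times$ and $y\in\gln\left(\Oe\otimes\ZZ_p\right)$ both $\mathbf{N}_{K/E}(x)$ and $\det y$ are units, so $x\mathbf{N}_{K/E}(x)^{-n}\det y$ and $x^{-1}\mathbf{N}_{K/E}(x)^{n}\det y$ are units whose images under the multiplicative map $\mathbf{N}_{n,0}$ are units in the $p$-adic ring $R$; hence the two reciprocals are defined. $R$-linearity is then immediate, since each construction amounts to ``multiply by a fixed ($F$- or $H$-independent) function, then precompose with $(x,y)\mapsto(x,y^{-1})$''; and continuity of $H$ from continuity of $F$ (and conversely) follows because $y\mapsto y^{-1}$, $y\mapsto\det y$, $x\mapsto\mathbf{N}_{n,0}(\cdots)$, and inversion on $R^\times$ are all continuous.

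Next I would check that $F\mapsto H$ lands in the correct target. Using $\mathbf{N}_{K/E}(ex)=\mathbf{N}_{K/E}(e)\mathbf{N}_{K/E}(x)$ and $\det\bigl(\mathbf{N}_{K/E}(e)y\bigr)=\mathbf{N}_{K/E}(e)^{n}\det y$, one computes that the argument of $\mathbf{N}_{n,0}$ appearing in $H\bigl(ex,\mathbf{N}_{K/E}(e)y\bigr)$ equals $e\cdot\bigl(x\mathbf{N}_{K/E}(x)^{-n}\det y\bigr)$, while $F\bigl(ex,(\mathbf{N}_{K/E}(e)y)^{-1}\bigr)=F\bigl(ex,\mathbf{N}_{K/E}(e)^{-1}y^{-1}\bigr)=\mathbf{N}_{n,0}(e)F(x,y^{-1})$ by condition \eqref{Fcondequ} applied with $y$ replaced by $y^{-1}$. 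The two factors of $\mathbf{N}_{n,0}(e)$ cancel, giving $H\bigl(ex,\mathbf{N}_{K/E}(e)y\bigr)=H(x,y)$. The reverse assignment $H\mapsto F$ is checked identically: now the argument of $\mathbf{N}_{n,0}$ in $F\bigl(ex,\mathbf{N}_{K/E}(e)^{-1}y\bigr)$ equals $e^{-1}\cdot\bigl(x^{-1}\mathbf{N}_{K/E}(x)^{n}\det y\bigr)$, contributing a factor $\mathbf{N}_{n,0}(e)$, while the $N$-invariance of $H$ removes the remaining dependence, so that $F\bigl(ex,\mathbf{N}_{K/E}(e)^{-1}y\bigr)=\mathbf{N}_{n,0}(e)F(x,y)$ as required by \eqref{Fcondequ}.

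Finally I would verify that the two composites are the identity. Composing in either order, one uses multiplicativity of $\mathbf{N}_{n,0}$ together with $\det(y^{-1})=(\det y)^{-1}$: the product of the two reciprocal factors is $\mathbf{N}_{n,0}$ evaluated at $\bigl(x\mathbf{N}_{K/E}(x)^{-n}\det y\bigr)\bigl(x^{-1}\mathbf{N}_{K/E}(x)^{n}(\det y)^{-1}\bigr)=1$, hence equals $1$, and the two successive applications of $(x,y)\mapsto(x,y^{-1})$ return the original second argument. Thus $F\mapsto H\mapsto F$ and $H\mapsto F\mapsto H$ are each the identity, so the constructions define mutually inverse $R$-linear bijections, preserving continuity, between the two function spaces. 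As noted, no step is deep; the only care required is the matrix-versus-scalar bookkeeping in the two paragraphs above.
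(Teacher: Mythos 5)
Your proof is correct and takes essentially the same approach as the paper, whose own proof is the one-line assertion that the lemma ``follows immediately from the properties of $F$ and $H$''; your computation simply spells out that immediate verification (the cancellation of the two $\mathbf{N}_{n,0}(e)$ factors, the product of the reciprocal factors being $\mathbf{N}_{n,0}(1)=1$, and the continuity and $R$-linearity bookkeeping), and all of the scalar-versus-matrix manipulations check out.
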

\begin{proof}
The proof follows immediately from the properties of $F$ and $H$.
\end{proof}

As in the notation of \cite[Section 5]{apptoSHLvv}, let 
\begin{align*}
\mathcal{G}_n = \left(\left(\OK\otimes\ZZ_p\right)^\times\times \gln\left(\Oe\otimes\ZZ_p\right)\right)/\overline{\OK^\times},
\end{align*}
where $\overline{\OK^\times}$ denotes the $p$-adic closure of $\OK^\times$ embedded diagonally as $\left(e, \mathbf{N}_{K/E}(e)1_n\right)$ in $\left(\OK\otimes\ZZ_p\right)^\times\times  \gln\left(\Oe\otimes\ZZ_p\right)$.  So Lemma \ref{HFlem} gives a bijection between the $R$-valued continuous functions on $\mathcal{G}_n$ and the $R$-valued continuous functions $F$ on $\left(\OK\otimes\ZZ_p\right)^\times\times\gln\left(\Oe\otimes\ZZ_p\right)$ satisfying Equation \eqref{Fcondequ}.

\begin{thm}\label{measurethm}
There is a $V_\varphi\otimes V_{-\varphi}$-valued $p$-adic measure $\mu_\varphi$ on $\mathcal{G}_n$ such that for each continuous function $H$ on $\mathcal{G}_n$
\begin{align*}
\int_{\mathcal{G}_n}H d\mu_\varphi= G_{n, 0, F, \varphi},
\end{align*}
where $F$ is defined in terms of $H$ as in Equation \eqref{FHrelationshipequ}.
\end{thm}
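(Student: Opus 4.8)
The plan is to construct $\mu_\varphi$ by the standard recipe for turning a $q$-expansion into a $p$-adic measure, following \cite[Lemma (4.2.0)]{kaCM} and \cite[Lemma 14]{apptoSHLvv}: first define $\mu_\varphi$ on locally constant functions by the prescribed formula, check that it is $R$-linear and bounded there, and then extend by continuity, using the $p$-adic $q$-expansion principle to pass between $p$-adic automorphic forms and their Fourier--Jacobi coefficients.

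First I would handle a locally constant (say $\ZZ_p$-valued, or more generally $R$-valued) function $H$ on $\mathcal{G}_n$. Lemma \ref{HFlem} produces from $H$ a locally constant function $F$ on $\left(\OK\otimes\ZZ_p\right)^\times\times\gln\left(\Oe\otimes\ZZ_p\right)$ satisfying Equation \eqref{Fcondequ}, i.e. the $k=n$, $\nu=0$ equivariance of Theorem \ref{thm2}/Lemma \ref{thm12forphi}; extending $F$ by $0$, it is supported on $\left(\OK\otimes\ZZ_p\right)^\times\times\gln\left(\Oe\otimes\ZZ_p\right)$ as required. Lemma \ref{thm12forphi} then gives an element $G_{n, 0, F, \varphi}\in V_\varphi\otimes V_{-\varphi}$, and I set $\mu_\varphi(H):=G_{n, 0, F, \varphi}$. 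This assignment is $R$-linear: $H\mapsto F$ is $R$-linear by Lemma \ref{HFlem}, and $F\mapsto G_{n, 0, F, \varphi}$ is $R$-linear because, by the $q$-expansion principle (\cite[Prop. 7.1.2.15]{la} in scalar weight, together with its $p$-adic refinement \cite[Corollary 10.4]{hi05}, \cite[Section 8.4]{hida}), the form $G_{n, 0, F, \varphi}$ is determined by its Fourier--Jacobi coefficients $c(\beta)$, and by Lemma \ref{thm12forphi} each $c(\beta)$ is a finite $\ZZ$-linear combination of values of $F$ times the scalars $\mathbf{N}_{n, 0}\left(a^{-1}\det\beta\right)\mathbf{N}_{E/\IQ}(\det\beta)^{-n}$, which do not depend on $F$.

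Next I would check boundedness. With $k=n$ and $\nu=0$ the scalars $\mathbf{N}_{n, 0}\left(a^{-1}\det\beta\right)\mathbf{N}_{E/\IQ}(\det\beta)^{-n}$ are $p$-adic units, since $a$ is a $p$-adic unit and $\det\beta\in E$; and $F$ and $H$ have the same sup-norm, since the factor $\mathbf{N}_{n, 0}\left(x^{-1}\mathbf{N}_{K/E}(x)^n\det y\right)$ relating them in Equation \eqref{FHrelationshipequ} is a $p$-adic unit for $x\in\left(\OK\otimes\ZZ_p\right)^\times$ and $y\in\gln\left(\Oe\otimes\ZZ_p\right)$, and $y\mapsto y^{-1}$ permutes $\gln\left(\Oe\otimes\ZZ_p\right)$. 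Hence every coefficient $c(\beta)$ of $G_{n, 0, F, \varphi}$ has $p$-adic absolute value at most $\|H\|_{\sup}$, so (again invoking the $q$-expansion principle to turn a bound on coefficients into a bound on the form) $\mu_\varphi(H)$ lies in a fixed bounded sub-$R$-module of $V_\varphi\otimes V_{-\varphi}$. Since the locally constant functions are dense in $\mathcal{C}(\mathcal{G}_n, \ZZ_p)$ and $V_\varphi\otimes V_{-\varphi} = \left(V_\varphi\otimes V_{-\varphi}\right)^{N_\varphi}$ is $p$-adically complete, $\mu_\varphi$ extends uniquely to a bounded $\ZZ_p$-linear map $\mathcal{C}(\mathcal{G}_n, \ZZ_p)\to V_\varphi\otimes V_{-\varphi}$, which is the desired $V_\varphi\otimes V_{-\varphi}$-valued $p$-adic measure.

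Finally I would verify the integration formula for arbitrary continuous $H$. Both sides of $\int_{\mathcal{G}_n}H\,d\mu_\varphi = G_{n, 0, F, \varphi}$ depend continuously on $H$: the left side by the definition of integration against a measure, and the right side because, by Lemma \ref{thm12forphi}, $G_{n, 0, F, \varphi}$ is the element $\left\{G_{n, 0, F\bmod p^j, \varphi}\right\}$ of $\varprojlim_j\left(V_\varphi\otimes V_{-\varphi}\right)\otimes R/p^jR$, which depends continuously on $F$, hence on $H$ via Lemma \ref{HFlem}. As the two sides agree on the dense set of locally constant $H$ by construction, they agree for all continuous $H$. I expect the main obstacle to be the verification that $F\mapsto G_{n, 0, F, \varphi}$ is $R$-linear and bounded at the level of $p$-adic automorphic forms (not merely of formal $q$-expansions); this is precisely where the $p$-adic $q$-expansion principle for $U^\varphi\times U^{-\varphi}$ is needed, and hence where the running hypothesis that $\varphi$ has signature $(n/2, n/2)$ at each non-definite place enters, since it is that hypothesis that lets one control the Fourier--Jacobi expansion of the pullback $G_{n, 0, F, \varphi}$ term-by-term as in Lemma \ref{thm12forphi}.
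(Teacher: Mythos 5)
Your proposal is correct and takes essentially the same route as the paper, whose proof consists of the single sentence ``The theorem follows immediately from the $p$-adic $q$-expansion principle'': you are simply spelling out the standard Katz-style argument (define on locally constant functions, bound the coefficients, extend by continuity) that the paper delegates to that principle and to the precedents \cite[Lemma (4.2.0)]{kaCM} and \cite[Theorem 15]{apptoSHLvv}. One small point of hygiene: the scalar $\mathbf{N}_{n,0}\left(a^{-1}\det\beta\right)\mathbf{N}_{E/\IQ}(\det\beta)^{-n}$ is a $p$-adic unit not merely because $\det\beta\in E$, but because the support condition on $F$ forces $\mathbf{N}_{K/E}(a)^{-1}\beta\in\gln\left(\Oe\otimes\ZZ_p\right)$ in every nonvanishing term, so $\det\beta$ is a unit exactly where it matters.
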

\begin{proof}
The theorem follows immediately from the $p$-adic $q$-expansion principle.
\end{proof}
\begin{rmk} 
For any function $H$ on $\mathcal{G}_n$, $\int_{\mathcal{G}_n}H du_\varphi$ is the pullback of a $p$-adic automorphic form $G_{n, 0, F}$ in $V_{\varphi\oplus-\varphi}$.  Furthermore, in \cite[Theorem 15]{apptoSHLvv}, we constructed a measure $\mu_{\mathfrak{b}, n}$ such that
\begin{align*}
\int_{\mathcal{G}_n}Hd\mu_{\mathfrak{b}, n} = G_{n, 0 , F}.
\end{align*}
For $H$ on $\mathcal{G}_n$ the pullback of $\mu_{\mathfrak{b}, n}(H)$ is the same as $\mu_\varphi(H)$.
\end{rmk}

We define $G_{k, \nu, F, \varphi, \kappa}:= \theta_{\kappa}^{k, \nu}\left(\padic\right)G_{k, \nu, F}$.  Let $\phi_\kappa$ be a highest weight vector for the weight $\kappa$ (like in Section \ref{qexp-section}).
Note that by the $p$-adic $q$-expansion principle and the action of the operators described in Section \ref{qexp-section},
\begin{align*}
\int_{\mathcal{G}_n}H(x, y)\phi_\kappa\left(\mathbf{N}_{K/E}(x)y^{-1}\right)d\mu = G_{n, 0, F, \varphi, \kappa}
\end{align*}
for each continuous function $H$ on $\mathcal{G}_n$; and furthermore, for any continuous function $F$ supported on $\left(\OK\otimes\ZZ_p^\times\right)^\times\times \gln\left(\Oe\otimes\ZZ_p\right)$ satisfying
\begin{align*}
F\left(ex, \mathbf{N}_{K/E}(e)^{-1}y\right) = \mathbf{N}_{k, \nu}(e)F(x, y)
\end{align*}
for all $e\in\OK^\times$, $x\in\OK\otimes\ZZ_p$, and $y\in\gln\left(\Oe\otimes\ZZ_p\right)$,
and extended by zero to all of $\left(\OK\otimes\ZZ_p^\times\right)\times M_{n\times n}\left(\Oe\otimes\ZZ_p\right)$,
\begin{align}\label{diffopmeasure}
\int_{\mathcal{G}_n}\frac{1}{\mathbf{N}_{k, \nu}\left(x\mathbf{N}_{K/E}(x)^{-n}\det y\right)}F\left(x, y^{-1}\right)\phi_\kappa\left(\mathbf{N}_{K/E}(x)y^{-1}\right)d\mu_\varphi = G_{k, \nu, F, \varphi, \kappa}.
\end{align}

Also, if we considered the other analogous differential operators discussed Remark \ref{otherrmk}, we would obtain other equations analogous to Equation \eqref{diffopmeasure}.  As explained in Remark \ref{otherrmk}, the ideas for these other operators are similar.

\begin{rmk}
Note that not only are the automorphic forms $G_{k, \nu, F, \varphi}$, with $F$ locally constant, pullbacks to $U^\varphi\times U^{-\varphi}$ of automorphic forms on $U^{\varphi\times-\varphi}$, but (as we see from the $q$-expansions and the $p$-adic $q$-expansion principle) the $p$-adic automorphic forms $\theta_\kappa^{k, \nu}\left(\padic\right)G_{k, \nu, F}$ are $p$-adic limits of pullbacks of automorphic forms arising over $U^{\varphi\oplus-\varphi}$.  At first glance, it may seem remarkable that we obtain these $p$-adic automorphic forms not only as limits of automorphic forms over $U^\varphi\times U^{-\varphi}$ but actually as $p$-adic limits of pullbacks of automorphic forms arising over the larger group $U^{\varphi\oplus-\varphi}$.  Note that this should not surprise us too much, though, since as Shimura notes in \cite[Section 23.12]{sh}, application of the analogous operators $A_{m}^{k, \nu, \varphi}\left(\ci\right)$ to automorphic forms similar to ours gives a $\ci$-automorphic form on $U^\varphi\times U^{-\varphi}$ that agrees with the pullback of a $\ci$-Eisenstein series on $U^{\varphi\oplus-\varphi}$ of the type discussed in \cite{Langlands}.  Shimura's work on pullbacks in the context of differential operators in \cite{sh} and elsewhere appears only to discuss the operators in the case of scalar weights; this is why we compare our operators to his only in the scalar situation, although the more general, vector-weight operators are a natural extension of the ones he considers in \cite{sh}.

\end{rmk}

\section{Remarks on other groups and pullbacks}\label{othergroupspullbacks}
We now provide some brief comments on how one can generalize our results from above to other groups and pullbacks.  

\subsection{Comments on other pullbacks}
As above, let $V$ be an $n$-dimensional vector space over $K$ together with a nondegenerate Hermitian pairing $\varphi$.  Let $r$ be a nonnegative integer, and let $V'$ be an $r$-dimensional vector space over $K$ together with a nondegenerate Hermitian pairing $\eta_r$ of signature $(r,r)$.  Let $V'' = V\oplus V'$ be an $n+r$-dimensional vector space over $K$ together with the nondegenerate Hermitian pairing
\begin{align*}
\psi = -\varphi\oplus\eta_r: V'\times V'&\rightarrow K\\
((v_1, v_2), (w_1, w_2))&\mapsto -\varphi(v_1, w_1)+\eta_r(v_2, w_2).
\end{align*}
Then, similarly to the situation in Section \ref{unitarygroupssection}, the identity
\begin{align*}
V'' = V\oplus V'
\end{align*}
induces inclusions of groups
\begin{align}
U^\varphi\times U^\psi&\hookrightarrow U^{\varphi\oplus\psi}\nonumber\\
G\left(U^\varphi\times U^\psi\right)&\hookrightarrow GU^{\varphi\oplus\psi}\label{gpincl}
\end{align}
compatible with an embedding
\begin{align*}
\mathfrak{Z}^\varphi\times\mathfrak{Z}^\psi\hookrightarrow\mathfrak{Z}^{\varphi\oplus\psi},
\end{align*}
with $\mathfrak{Z}^{\varphi\oplus\psi}$ holomorphically isomorphic to the space
\begin{align*}
\h{n+r} = \prod_{\sigma\in\Sigma}\left\{z\in\gl_{n+r}(\IC)\mid i({ }^t\bar{z}-z)>0\right\}.
\end{align*}
The details of one such embedding, in terms of coordinates, are given in \cite[Equation (6.10.2)]{sh}.  Similarly, the inclusion of groups given in \eqref{gpincl} induces a corresponding map of Shimura varieties, analogous to the one discussed in Section \ref{symmspacesshvarieties}.

In this context, like in the special case in which $\psi = -\varphi$ discussed above, we can again discuss pullbacks of automorphic forms.  For a detailed discussion in the $\ci$-case, see \cite[Sections 21-22]{sh}.

Although Shimura seems to have discussed differential operators together with pullbacks only in the context in which $\psi =- \varphi$, our entire construction from above should carry through for the case of more general $\psi$ (i.e. for $\psi =-\varphi\oplus\eta_r$ with $r>0$).

Note that our entire discussion in the earlier sections of the paper carries through to this situation.  We discuss details in this direction in joint work with Wan \cite{EW}.

\subsection{Comments on the case of Siegel modular forms and symplectic groups}\label{symplecticmeasure}

Although our discussion focuses on unitary groups, we briefly explain how our methods extend naturally to the case of Siegel modular forms, i.e. automorphic forms on symplectic groups.  

Note that if we replace the CM field $K$ by the totally real field $E$, then we may replace our Hermitian forms with alternating forms and our unitary groups with symplectic groups.  In particular, for $\eta$ as in Equation \eqref{equ-etadef}, $GU^\eta$ is replaced by $GSp(n)$, and $U^\eta$ is replaced by $Sp(n)$.  We then obtain an inclusion of groups into a symplectic group, analogous to the inclusion \eqref{gpincl}.  This leads to compatible inclusions of symmetric spaces and moduli spaces, analogous to the ones given in Section \ref{symmspacesshvarieties}.

Although the discussion of differential operators and pullbacks in \cite{sh} only deals with the case of unitary groups of signature $(n,n)$, \cite[Section 23]{shar} explains how to handle the symplectic case (i.e. $Sp(n)$) similarly.  Also, \cite[Theorem 12.7]{shar} includes the statement of Theorem \ref{thm127} for the case of symplectic groups.  In \cite[Section 6]{apptoSHLvv}, we outlined how to construct a $p$-adic Eisenstein measure for symplectic groups, in analogue with the construction for unitary groups that gave the $p$-adic Eisenstein series whose pullbacks we used earlier in this paper.  Using the incarnation of these Eisenstein series for symplectic groups, one can then carry through our procedure from above in the context of symplectic groups, thus obtaining a $p$-adic families of Siegel automorphic forms in the setting of symplectic groups.  

It will be interesting to see how the $p$-adic families discussed here relate to the $p$-adic families of Siegel modular forms that Skinner and Urban announced (at the 2010 RTG/FRG workshop on arithmetic geometry at UCLA) they are constructing via pullbacks.

\section{Acknowledgements}

I am grateful to Harris, Skinner, and Urban for helpful conversations and suggestions while I was working on this paper.  I am also very thankful to Hida and K.-W. Lan for answering my questions about $q$-expansions.

\bibliography{eischen}   

\end{document}